\theoremstyle{definition}
\newtheorem{definition}{Definition}[section]
\newtheorem{theorem}{Theorem}[section]
\newtheorem{example}[theorem]{Example}
\newtheorem{proposition}[theorem]{Proposition}
\newtheorem{lemma}[theorem]{Lemma}
\newtheorem{remark}[theorem]{Remark}
\numberwithin{equation}{section}
\begin{document}

%%%%%%%%%%%%%%%%%%%%%%%%%%%%%%%%%%%%%%%%%%%%%%%%%%%%%%%%%%%%
%%%%%%%%%%%%%%%%%%%%%%%%%%%%%%%%%%%%%%%%%%%%%%%%%%%%%%%%%%%%

%%%%%%%%%%%%%%%%%%%%%%%%%%%%%%%%%%%%%%%%%%%%%%%%%%%%%%%%%%%%
%%%%%%%%%%%%%%%%%%%%%%%%%%%%%%%%%%%%%%%%%%%%%%%%%%%%%%%%%%%%
\vspace{0.5in}

\renewcommand{\bf}{\bfseries}
\renewcommand{\sc}{\scshape}
%insert defs/styles
\vspace{0.5in}

\title[Category and Topological Complexity]%
{Category and Topological Complexity of the configuration space $F(G\times \mathbb{R}^n,2)$ \\ }

%    Information for first author:
\author{Cesar A. Ipanaque Zapata}
\address{Deparatmento de Matem\'{a}tica,UNIVERSIDADE DE S\~{A}O PAULO
INSTITUTO DE CI\^{E}NCIAS MATEM\'{A}TICAS E DE COMPUTA\c{C}\~{A}O -
USP , Avenida Trabalhador S\~{a}o-carlense, 400 - Centro CEP:
13566-590 - S\~{a}o Carlos - SP, Brasil}
%    Current address (if needed):
%\curraddr{}
\email{cesarzapata@usp.br}
%\thanks{The first author was supported in part by NSF Grant \#000000.}

%    Information for second author (if needed):
%\author{Author Two}
%\address{}
%\email{}
%\thanks{Support information for the second author.}

%    General info
%%%%%%%%%%%%%%%%%%%%%%%%%%%%%%%%%%%%%%%%%%%%%%%%%%%
\subjclass[2010]{Primary 55R80; Secondary 55M30, 55P10}                                    %
%                                                                                                                           %
%         Please use the current 2010 Mathematics Subject Classification:             %
%         http://www.ams.org/mathscinet/msc/                                                        %
%         http://www.zentralblatt-math.org/msc/en/                                                 %
%%%%%%%%%%%%%%%%%%%%%%%%%%%%%%%%%%%%%%%%%%%%%%%%%%%

\keywords{configuration spaces, cup-length, zero-divisor cup-length, topological complexity, Lusternik-Schnirelmann category}
\thanks {The author wishes to acknowledge support for this research from grant\#2016/18714-8, São Paulo Research Foundation (FAPESP). Also, the author is very grateful to Oziride M. Neto for their comments and encouraging remarks which were of invaluable.}

\begin{abstract} The Lusternik-Schnirelmann category cat and topological complexity TC are related homotopy invariants. The topological complexity TC has applications to the robot motion planning problem. We calculate the Lusternik-Schnirelmann category and topological complexity of the ordered configuration space of two distinct points in the product $G\times\mathbb{R}^n$ and apply the results to the planar and spatial motion of two rigid bodies in $\mathbb{R}^2$ and $\mathbb{R}^3$ respectively.
\end{abstract}

\maketitle

%%%%%%%%%%%%%%%%%%%%%%%%%%%%%%%%%%%%%%%%%%%%%%%%%%%%%%%%%%%%%%

\section{\bf Introduction}

Let $X$ be the space of all possible configurations or states of a mechanical system. A motion planning algorithm on $X$ is a function which assigns to any pair of configurations $(A,B)\in X\times X$, an initial state $A$ and a desired state $B$, a continuous motion of the system starting at the initial state $A$ and ending at the desired state $B$. The elementary problem of robotics, \textit{the motion planning problem}, consists of finding a motion planning algorithm for a given mechanical system. The motion planning algorithm should be continuous, that is, it depends continuously on the pair of points $(A,B)$. Absence of continuity will result in instability of the behavior of the motion planning. Unfortunately, a (global) continuous motion planning algorithm on a space $X$ exists if and only if $X$ is contractible (see \cite{farber2003topological}). If $X$ is not contractible, then only local continuous motion plans may be found. Informally, the topological complexity, TC$(X)$ is the minimal number of local  continuous motion plans, \textit{effective motion planning algorithms}, which are needed to construct an algorithm for autonomous motion planning of a system having $X$ as its state space. The design of effective motion planning algorithms is one of the challenges of modern robotics (see, for example Latombe \cite{latombe2012robot} and LaValle \cite{lavalle2006planning}).

Investigation of the problem of simultaneous motion planning without collisions for $k$ robots in a topological space $X$ leads one to study \textit{the ordered configuration space} $F(X,k)$ of $k$ distinct points of a topological space $X$ (see \cite{fadell1962configuration}). It is the subset   \[F(X,k)=\{(x_1,\ldots,x_k)\in X^k\mid ~~x_i\neq x_j\text{ for } i\neq j \},\] topologised as a subspace of the Cartesian power $X^k$. This space is used in robotics to control multiple objects simultaneously, trying to avoid collisions between them \cite{farber2008invitation}. 
 
In \cite{zapata2017non}, the author shows that the ordered configuration spaces $F(M,k)$ of topological manifolds $M$ are never contractible. Thus, the collision-free  simultaneous motion planning problem on a manifold is a major challenge. Indeed, computation of Lusternik-Schnirelmann category LS and the topological complexity TC of the configuration space $F(M,k)$ is very difficult. The LS category of the configuration space  $F(\mathbb{R}^m,k)$ has been computed by Roth in \cite{roth2008category} and  TC$(F(\mathbb{R}^m,k))$ for $m=2$ and $m$ odd was computed by Farber and Yuzvinsky in  \cite{farber2004topological}. Farber and Grant \cite{farber2009topological}, extended the results to all dimensions $m$.  Farber et al. \cite{farber2007topological} determined the topological complexity of $F(\mathbb{R}^m-Q_r,k)$ for $m=2,3$. Later Gonz\'{a}lez and Grant \cite{gonzalez2015sequential} extended the results to all dimensions $m$. Cohen and Farber \cite{cohen2011topological} computed the topological complexity of the configuration space $F(\Sigma_g-Q_r,k)$ of orientable surfaces $\Sigma_g$. Recently in \cite{zapata2017lusternik}, the author computed the LS category and TC of the configuration space $F(\mathbb{CP}^m,2)$. Many more related results can be found in the recent survey papers \cite{cohen2018topological} and \cite{farber2017configuration}. 

In this paper we calculate the Lusternik-Schnirelmann category and topological complexity of the ordered configuration space of two distinct points in the product $G\times\mathbb{R}^n$, where $G$ is a compact connected Lie group satisfying certain conditions and $n$ is a natural number (see Theorem \ref{theorem-cat} and Theorem \ref{theorem=tc}, respectively).

\section{Preliminary results}

 \begin{remark}\label{configu-group}
Let $k\geq 1$.  If $G$ is a topological group, then it is well-known that the configuration space $F(G,k+1)$ is homeomorphic to the product \[G\times F(G-\{e\},k),\] under the homeomorphism $(g_0,g_1,\ldots,g_k)\mapsto (g_0,g_1g_0^{-1},\ldots,g_kg_0^{-1})$ with its inverse $(g_0,g_1,\ldots,g_k)\mapsto (g_0,g_1g_0,\ldots,g_kg_0)$.  Here $e$ denotes the identity element of the group $G$.
 \end{remark}
 
\begin{definition}
The \textit{Lusternik-Schnirelmann category} (LS category) or category of a topological space $X$, denoted cat$(X)$, is the least integer $m$ such that $X$ can be covered with $m$ open sets, which are all contractible within $X$. 
\end{definition}

We use a definition of category which is one greater than that given in \cite{cornea2003lusternik}. For example, the category of a contractible space is one.

\begin{example}\label{cat-wedges-spheres}%

If $Z=\underbrace{\mathbb{S}^{m_1}\vee\cdots\vee\mathbb{S}^{m_n}}_{n \text{ factors }}$ is a wedge of spheres $\mathbb{S}^{m_i}$, then
\begin{equation*}
\text{cat}(Z)=2.
\end{equation*}
\end{example}

Farber \cite{farber2003topological} defined a numerical invariant TC$(X)$. Let $PX$ denote the space of all continuous paths $\gamma: [0,1] \longrightarrow X$ in $X$ and  $\pi: PX \longrightarrow X \times X$ denote the map associating to any path $\gamma\in PX$ the pair of its initial and end points $\pi(\gamma)=(\gamma(0),\gamma(1))$. Equip the path space $PX$ with the compact-open topology. 

\begin{definition}\cite{farber2003topological}
The \textit{topological complexity} of a path-connected space $X$, denoted by TC$(X)$, is the least integer $m$ such that the Cartesian product $X\times X$ can be covered with $m$ open subsets $U_i$, \begin{equation*}
        X \times X = U_1 \cup U_2 \cup\cdots \cup U_m, 
    \end{equation*} such that for any $i = 1, 2, \ldots , m$ there exists a continuous local section $s_i : U_i \longrightarrow PX$ of $\pi$, that is, $\pi\circ s_i = id$ over $U_i$. If no such $m$ exists we will set TC$(X)=\infty$. 
\end{definition}

\begin{remark}
 We recall that TC$(G)=$ cat$(G)$ for any connected Lie group $G$ (see \cite{farber2004instabilities}, Lemma 8.2). 
\end{remark}

Next we give the definition of monoidal topological complexity, again one greater than that given in \cite{dranishnikov2014topological}.

\begin{definition}\cite{dranishnikov2014topological}
The \textit{monoidal topological complexity} of a path-connected space $X$, denoted by TC$^M(X)$, is the least integer $m$ such that the Cartesian product $X\times X$ can be covered with $m$ open subsets $U_i$, \begin{equation*}
        X \times X = U_1 \cup U_2 \cup\cdots \cup U_m, 
    \end{equation*} such that for any $i = 1, 2, \ldots , m$ there exists a continuous local section $s_i : U_i \longrightarrow PX$ of $\pi$, that is, $\pi\circ s_i = id$ over $U_i$, and if $(x,x)\in U_i$ then $s_i(x,x)(t)=x,\forall t\in [0,1]$. If no such $m$ exists, we set TC$^M(X)=\infty$. The motion planning algorithm $s$ is called the \textit{reserved motion planning algorithm}.
\end{definition}

\begin{remark}
 One of the basic properties of cat$(X)$ and TC$(X)$ is their homotopy invariance (\cite{cornea2003lusternik}, Theorem 1.30; \cite{farber2003topological}, Theorem 3). In contrast, TC$^M$ is not a homotopy invariant in general (see \cite{iwase2010topological}). From (\cite{dranishnikov2014topological}, Theorem 2.1), if $X$ is a finite CW complex, TC$(X)\leq $ TC$^M(X)\leq$  TC$(X)+1$ .
\end{remark}

\begin{example}
 TC$(\mathbb{S}^m)=$ TC$^M(\mathbb{S}^m)$ for any $m\geq 1$ (\cite{dranishnikov2014topological}, Corollary 2.6). Furthermore, 
 TC$(G)=$ TC$^M(G)$ for any connected Lie group $G$ (\cite{dranishnikov2014topological}, Lemma 2.7).
\end{example}

Let $\mathbb{K}$ be a field. The singular cohomology $H^{*}(X;\mathbb{K}):=H^{*}(X)$ is a graded $\mathbb{K}-$algebra with multiplication \[\cup:H^{*}(X)\otimes H^{*}(X)\longrightarrow H^{*}(X)\] given by the cup-product. The tensor product $H^{*}(X)\otimes H^{*}(X)$ is also a graded $\mathbb{K}-$algebra with the multiplication
\[(u_1\otimes v_1)\cdot (u_2\otimes v_2):=(-1)^{\deg(v_1)\deg(u_2)}u_1u_2\otimes v_1v_2,\]
 where $\deg(v_1)$ and $\deg(u_2)$ denote the degrees of cohomology classes $v_1$ and $u_2$ respectively. The cup-product $\cup$ is a homomorphism of $\mathbb{K}-$algebras. 
 
 \begin{definition}(\cite{farber2003topological}, Definition 6)
 The kernel of homomorphism $\cup$ is \textit{the ideal of the zero-divisors} of $H^{*}(X)$. The \textit{zero-divisors-cup-length} of $H^{*}(X)$, denoted $zcl(H^{*}(X))$, is the length of the longest nontrivial product in the ideal of the zero-divisors of $H^{*}(X)$.
 \end{definition}

Proposition \ref{prop-1} below gives the general properties of the category and topological complexity of a space $X$:

\begin{proposition}\label{prop-1} 
\begin{enumerate}
    \item  (\cite{cornea2003lusternik}, Theorem 1.5)
If $R$ is a commutative ring with unit and $X$ is a topological space, then \[1+cup_R(X)\leq \text{cat}(X),\] where  $cup_R(X)$ is the least integer $n$ such that all $(n+1)-$fold cup products vanish in the reduced cohomology $\widetilde{H^\star}(X;R)$.

\item If $\mathbb{K}$ is a field and $X$ be a path-connected topological space, then \[1+zcl_{\mathbb{K}}(X)\leq \text{TC}(X).\] 

\end{enumerate}

\end{proposition}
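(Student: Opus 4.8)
The plan is to treat the two items separately. Item (1) requires nothing new: the inequality $1+cup_R(X)\le\text{cat}(X)$ is precisely (\cite{cornea2003lupton...}{cornea2003lusternik}, Theorem 1.5) once one remembers that our $\text{cat}$ is normalised to be one larger than the one used there, and Example \ref{cat-wedges-spheres} is the consistency check. So the real content is item (2), for which I would reproduce the standard cohomological (Schwarz-genus) lower bound for $\text{TC}$, specialised to the path fibration $\pi\colon PX\to X\times X$.

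First I would recall that $\pi$ is a fibration and that $\mathrm{ev}_0\colon PX\to X$, $\gamma\mapsto\gamma(0)$, is a homotopy equivalence with $\pi\simeq\Delta\circ\mathrm{ev}_0$, where $\Delta\colon X\to X\times X$ is the diagonal; an explicit homotopy is $H_s(\gamma)=(\gamma(0),\gamma(s))$. Consequently, if $U\subseteq X\times X$ is open and admits a continuous section $s\colon U\to PX$ of $\pi$, then the inclusion $\iota_U\colon U\hookrightarrow X\times X$ satisfies $\iota_U=\pi\circ s\simeq\Delta\circ(\mathrm{ev}_0\circ s)$, so $\iota_U^{*}$ factors through $\Delta^{*}$ on cohomology and therefore kills $\ker(\Delta^{*}\colon H^{*}(X\times X)\to H^{*}(X))$. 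Next, since $\mathbb{K}$ is a field, the Künneth isomorphism identifies $H^{*}(X\times X)$ with $H^{*}(X)\otimes H^{*}(X)$ in such a way that $\Delta^{*}$ becomes the cup-product map $\cup$; hence the ideal of zero-divisors is exactly $\ker(\Delta^{*})$, i.e. the set of classes that restrict to zero on any open set over which $\pi$ has a section.

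Then I would run the genus/relative-cup-product argument. Let $z_1,\dots,z_k$ be zero-divisors with $z_1\cdots z_k\neq 0$, where $k=zcl_{\mathbb{K}}(X)$, and suppose $X\times X=U_1\cup\cdots\cup U_m$ realises $\text{TC}(X)=m$ with sections $s_i$ over $U_i$. If $m\le k$, then (any initial subproduct of a nonzero product being nonzero) $z_1\cdots z_m\neq 0$; since $z_i|_{U_i}=0$ each $z_i$ lifts to a relative class $\bar z_i\in H^{*}(X\times X,U_i)$, and the relative cup product $\bar z_1\cup\cdots\cup\bar z_m\in H^{*}(X\times X,\,U_1\cup\cdots\cup U_m)=H^{*}(X\times X,\,X\times X)=0$ restricts to $z_1\cdots z_m\neq 0$, a contradiction. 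Hence $m\ge k+1$, which is the claimed inequality $1+zcl_{\mathbb{K}}(X)\le\text{TC}(X)$.

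The only points that demand care are the homotopy-theoretic step — that a local section of $\pi$ forces $\iota_U$ to factor, up to homotopy, through $\Delta$, which uses that $\pi$ is a fibration — and the compatibility of the relative cup product with restriction to absolute cohomology; both are routine, so I do not expect a genuine obstacle. The field hypothesis in (2) enters only to make the Künneth identification of $\Delta^{*}$ with $\cup$ hold exactly, which is exactly what lets us read off the zero-divisor ideal as $\ker\Delta^{*}$.
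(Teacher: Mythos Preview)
Your argument is correct, but note that the paper does not actually prove this proposition: it merely records the two inequalities as standard facts, citing \cite{cornea2003lusternik} for item (1) and implicitly relying on Farber's original lower bound (\cite{farber2003topological}, Theorem~7) for item (2). What you have written for (2) is exactly the classical Schwarz-genus argument behind Farber's result---factor the inclusion $\iota_U$ through $\Delta$ up to homotopy, identify the zero-divisor ideal with $\ker\Delta^{*}$ via K\"unneth over a field, and use the relative cup product to force $m\ge k+1$---so you have supplied a full proof where the paper gives only a reference. There is no gap; if anything, your write-up is more complete than the paper's treatment of this statement.
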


It is easy to verify that the cup-length and the zero-divisor cup-length have the properties listed below.

\begin{lemma}\label{prop-cup-length}
Let $\mathbb{K}$ be a field and $X,Y$ be topological spaces. Then
\begin{enumerate}
\item If $H^k(Y;\mathbb{K})$ is a finite dimensional $\mathbb{K}-$vector space for all $k\geq 0$, then \begin{equation*}cup_{\mathbb{K}}(X\times Y)= cup_{\mathbb{K}}(X)+cup_{\mathbb{K}}(Y);\end{equation*}
\item If $X,Y$ is CW complexes, then \begin{equation*} cup_{\mathbb{K}}(X\vee Y)= \max\{cup_{\mathbb{K}}(X),cup_{\mathbb{K}}(Y)\}.\end{equation*} Furthermore, \begin{equation*} zcl_{\mathbb{K}}(X\vee Y)\geq \max\{zcl_{\mathbb{K}}(X),zcl_{\mathbb{K}}(Y)\}.\end{equation*}
\end{enumerate}
\end{lemma}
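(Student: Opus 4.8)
The plan is to establish the four displayed identities/inequalities of Lemma \ref{prop-cup-length} in sequence, in each case by tracking cohomology classes through the relevant maps in cohomology. Throughout I work with $\mathbb{K}$-coefficients, which I suppress from the notation.

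\textbf{Part (1).} First I would recall the K\"unneth theorem: since $H^k(Y;\mathbb{K})$ is finite dimensional for every $k$, the cross product induces an isomorphism of graded $\mathbb{K}$-algebras $H^*(X)\otimes H^*(Y)\xrightarrow{\ \cong\ }H^*(X\times Y)$. To prove $cup_{\mathbb{K}}(X\times Y)\geq cup_{\mathbb{K}}(X)+cup_{\mathbb{K}}(Y)$ I would take nonzero products $u_1\cdots u_p\neq 0$ in $\widetilde{H^*}(X)$ and $v_1\cdots v_q\neq 0$ in $\widetilde{H^*}(Y)$ of maximal lengths $p=cup_{\mathbb{K}}(X)$, $q=cup_{\mathbb{K}}(Y)$; pulling these back along the projections $\mathrm{pr}_X,\mathrm{pr}_Y$ and multiplying, the class $\mathrm{pr}_X^*(u_1)\cdots\mathrm{pr}_X^*(u_p)\cdot\mathrm{pr}_Y^*(v_1)\cdots\mathrm{pr}_Y^*(v_q)$ corresponds under K\"unneth to $(u_1\cdots u_p)\otimes(v_1\cdots v_q)\neq 0$, and each factor lies in positive degree hence in reduced cohomology. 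For the reverse inequality $cup_{\mathbb{K}}(X\times Y)\leq cup_{\mathbb{K}}(X)+cup_{\mathbb{K}}(Y)$, I would take any product $w_1\cdots w_N$ of classes in $\widetilde{H^*}(X\times Y)$ with $N=cup_{\mathbb{K}}(X\times Y)$; writing each $w_i$ via K\"unneth as a sum of decomposable tensors $a\otimes b$ with at least one of $a,b$ of positive degree, expanding the product distributively, and observing that in $H^*(X)\otimes H^*(Y)$ any product of more than $cup_{\mathbb{K}}(X)$ positive-degree classes from the $X$-factor vanishes (and similarly for $Y$), one sees every surviving term has $\leq cup_{\mathbb{K}}(X)$ positive factors on the left and $\leq cup_{\mathbb{K}}(Y)$ on the right, forcing $N\leq cup_{\mathbb{K}}(X)+cup_{\mathbb{K}}(Y)$.

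\textbf{Part (2), cup-length of a wedge.} I would use that for CW complexes $\widetilde{H^*}(X\vee Y)\cong\widetilde{H^*}(X)\oplus\widetilde{H^*}(Y)$ as rings with the property that the cross terms multiply to zero: the product of a class pulled back from $X$ (via the retraction $X\vee Y\to X$ collapsing $Y$) with one pulled back from $Y$ vanishes, because such a product restricts to $0$ on each wedge summand and the restriction map $\widetilde{H^*}(X\vee Y)\to\widetilde{H^*}(X)\oplus\widetilde{H^*}(Y)$ is injective. Hence any nonzero product of reduced classes must come entirely from $\widetilde{H^*}(X)$ or entirely from $\widetilde{H^*}(Y)$, giving $cup_{\mathbb{K}}(X\vee Y)\leq\max\{cup_{\mathbb{K}}(X),cup_{\mathbb{K}}(Y)\}$; the reverse inequality is immediate since both inclusions $X\hookrightarrow X\vee Y$, $Y\hookrightarrow X\vee Y$ admit retractions, so a maximal nonzero product on either factor pushes forward to a nonzero product on the wedge. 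For the zero-divisor statement $zcl_{\mathbb{K}}(X\vee Y)\geq\max\{zcl_{\mathbb{K}}(X),zcl_{\mathbb{K}}(Y)\}$, I would use the retraction $r\colon X\vee Y\to X$ and inclusion $\iota\colon X\hookrightarrow X\vee Y$ with $r\circ\iota=\mathrm{id}_X$: this makes $r\times r$ and $\iota\times\iota$ compatible with the cup-product maps $H^*(-)\otimes H^*(-)\to H^*(-)$, so $(r\times r)^*$ carries the ideal of zero-divisors of $H^*(X)$ into that of $H^*(X\vee Y)$ and $(\iota\times\iota)^*$ provides a left inverse; therefore a nontrivial product of length $zcl_{\mathbb{K}}(X)$ of zero-divisors in $H^*(X)$ maps to a nontrivial product of the same length in $H^*(X\vee Y)$, and symmetrically for $Y$.

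The main obstacle is the reverse inequality in Part (1), i.e.\ showing a long product in $\widetilde{H^*}(X\times Y)$ cannot exceed $cup_{\mathbb{K}}(X)+cup_{\mathbb{K}}(Y)$; this is where one must be careful, because a single reduced class in $\widetilde{H^*}(X\times Y)$ need not be a pullback from one factor but is only a sum of mixed tensors, so the bound requires the bookkeeping argument on the bidegree (number of positive-degree factors from each side) after multiplying out, together with the fact that K\"unneth makes $H^*(X\times Y)$ genuinely the tensor-product ring rather than merely admitting a ring map from it — which is exactly the role of the finite-dimensionality hypothesis on $H^k(Y;\mathbb{K})$. The wedge statements, by contrast, reduce to the elementary observation that cross-terms in a wedge multiply to zero, so they should be routine once the retraction maps are set up.
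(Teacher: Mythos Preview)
The paper does not prove this lemma; it prefaces the statement with ``It is easy to verify that the cup-length and the zero-divisor cup-length have the properties listed below'' and gives no argument. Your proposal is correct and supplies precisely the standard verification the paper omits: K\"unneth over a field (which is where the finite-dimensionality hypothesis on $H^k(Y;\mathbb{K})$ enters) yields the product formula in (1); the reduced-cohomology ring of a wedge splits as $\widetilde{H^*}(X)\oplus\widetilde{H^*}(Y)$ with vanishing cross-multiplication, giving the cup-length identity in (2); and naturality of the zero-divisor ideal along the retraction/inclusion pair $X\hookrightarrow X\vee Y\to X$ gives the $zcl$ inequality. Your identification of the only non-formal step --- the upper bound $cup_{\mathbb{K}}(X\times Y)\le cup_{\mathbb{K}}(X)+cup_{\mathbb{K}}(Y)$, requiring the pigeonhole count on bidegrees after expanding via K\"unneth --- is accurate, and your sketch of that count (each reduced tensor summand has at least one positive-degree factor, so a surviving term in an $N$-fold product contributes at most $cup_{\mathbb{K}}(X)$ positive factors on the left and $cup_{\mathbb{K}}(Y)$ on the right) is sound.
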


%%%%%%%%%%%%%%%%%%%%%%%%%%%%%%%%%%%%%%%%%%%%%%%%%%%%%%%%%%%%%%%%%%%%%%%

\section{\bf Main Results}

We first recall some lemmas. We denote by $Int(M)$ the interior of the manifold $M$.

 \begin{lemma}\label{m-d-igual-m-p}
 Let $M$ be a connected $m-$dimensional smooth manifold (with or without boundary). Let $D_1,\ldots,D_k\subseteq Int(M)$ be subsets homeomorphic to an $m-$dimensional ball $\mathbb{D}^m=\{x\in \mathbb{R}^m\mid~~\parallel x\parallel\leq 1\}$ such that each $D_i$ has a neighbourhood $V_i\subseteq Int(M)$, where $D_i\subsetneq V_i$ and $V_i$ is also homeomorphic to $\mathbb{D}^m$ and $V_i\cap V_j=\emptyset$ for $~i\neq j$. Let $p_1,\ldots,p_k$ the centers of $D_1,\ldots,D_k$, respectively. Then the complement $ M-\bigcup_{i=1}^{k}D_i$ is homeomorphic to $M-\{p_1,\ldots,p_k\}$.
 \end{lemma}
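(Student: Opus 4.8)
The plan is to reduce the statement to the standard fact that deleting a point from the interior of a manifold is ``the same'' as deleting a small closed ball around that point, and then to iterate. First I would treat the local picture: fix one index $i$ and work inside the neighbourhood $V_i$, which by hypothesis is homeomorphic to the standard ball $\mathbb{D}^m$. Under such a homeomorphism $\varphi_i\colon V_i\to\mathbb{D}^m$, the set $D_i$ corresponds to some compact subset $\varphi_i(D_i)\subseteq\mathrm{Int}(\mathbb{D}^m)$ homeomorphic to $\mathbb{D}^m$ with centre $\varphi_i(p_i)$; the point is that in $\mathbb{D}^m$ there is a homeomorphism of $\mathbb{D}^m$ onto itself, fixing a neighbourhood of the boundary sphere, that carries $\varphi_i(D_i)$ onto a tiny round ball about $\varphi_i(p_i)$ and hence carries $\mathbb{D}^m-\varphi_i(D_i)$ onto $\mathbb{D}^m-\{\varphi_i(p_i)\}$ (the latter two are in fact homeomorphic since $\mathbb{D}^m$ minus any closed ball in its interior deformation-retracts/straightens to $\mathbb{D}^m$ minus a point; more carefully, one builds an explicit homeomorphism radially). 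Because this self-homeomorphism of $\mathbb{D}^m$ is the identity near $\partial\mathbb{D}^m$, transporting it back via $\varphi_i^{-1}$ and extending by the identity outside $V_i$ produces a homeomorphism $h_i\colon M\to M$ which is the identity outside $V_i$ and which satisfies $h_i(D_i)=\{p_i\}$, or more precisely restricts to a homeomorphism $M-D_i\xrightarrow{\ \approx\ }M-\{p_i\}$.

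Next I would globalise. Since the $V_i$ are pairwise disjoint and each $h_i$ is supported in $V_i$, the $h_i$ commute and their composite $h=h_k\circ\cdots\circ h_1\colon M\to M$ is a well-defined homeomorphism, with $h$ equal to $h_i$ on each $V_i$ and the identity on $M-\bigcup_i V_i$. Restricting $h$ to the complement gives a homeomorphism
\[
M-\bigcup_{i=1}^{k}D_i \ \xrightarrow{\ \approx\ }\ M-\bigcup_{i=1}^{k}\{p_i\} \;=\; M-\{p_1,\ldots,p_k\},
\]
which is exactly the claim. (Equivalently, one can avoid building a global $h$ and instead just note that the restriction of each $h_i$ matches the identity on the overlap of domains, so the pieces glue to the desired homeomorphism of complements.)

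The only genuine content is the local step, i.e. producing, inside the standard ball $\mathbb{D}^m$, a self-homeomorphism that is the identity near the boundary and shrinks a prescribed interior tame closed ball to a point (or to an arbitrarily small round ball). The cleanest route is to observe that $\varphi_i(D_i)\subseteq\mathrm{Int}(\mathbb{D}^m)$ and $\varphi_i(V_i)=\mathbb{D}^m$ with $\varphi_i(D_i)\subsetneq\mathbb{D}^m$, choose a smaller closed coordinate ball $B$ with $\varphi_i(D_i)\subseteq\mathrm{Int}(B)\subseteq B\subseteq\mathrm{Int}(\mathbb{D}^m)$, use the Annulus/collar-type straightening to move $\varphi_i(D_i)$ onto a round ball centred at $\varphi_i(p_i)$ by a homeomorphism supported in $\mathrm{Int}(B)$, and then use the radial ``push to the centre'' homeomorphism of a round ball onto that ball with the centre removed. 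I expect this local construction — especially making the support genuinely compact in the interior so that the extension-by-identity is continuous — to be the main (and only real) obstacle; everything else is bookkeeping about disjoint supports.
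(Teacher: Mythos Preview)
The paper does not actually prove this lemma; it is introduced with ``We first recall some lemmas'' and stated without proof as a standard fact. Your argument is the standard one and is correct: build, inside each $V_i\cong\mathbb{D}^m$, a homeomorphism $V_i-D_i\to V_i-\{p_i\}$ that is the identity near $\partial V_i$, extend by the identity over $M\setminus\mathrm{Int}(V_i)$, and use the pairwise disjointness of the $V_i$ to assemble the global homeomorphism of complements.

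Two minor remarks. The sentence ``produces a homeomorphism $h_i\colon M\to M$ \ldots which satisfies $h_i(D_i)=\{p_i\}$'' is self-contradictory, since a homeomorphism cannot collapse a ball to a point; your parenthetical ``more precisely restricts to a homeomorphism $M-D_i\to M-\{p_i\}$'' is the correct formulation, and you should lead with that. Second, as you note, the local step at face value (arbitrary topological balls $D_i\subsetneq V_i$) would require an Annulus/Schoenflies-type input, which is deep. In the paper's applications $M$ is smooth and the $D_i,V_i$ are chart balls, so the intended reading is almost certainly that the homeomorphism $V_i\to\mathbb{D}^m$ carries $D_i$ to a concentric round ball; under that reading the radial stretch $r\mapsto\frac{r-\rho}{1-\rho}$ on $[\rho,1]$ does the job with no heavy machinery. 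It would be worth stating that assumption explicitly rather than invoking the Annulus theorem.
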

 
 For $m\geq 0$, let $Q_m\subseteq Int(M)$ be a finite subset of $m$ points in $Int(M)$.
 
 \begin{lemma}\label{inavarianca-bordo}\cite{zapata2017collision}
Let $M$ be a connected and compact smooth manifold with nonempty boundary. Then for each $k\geq 1$, the inclusion map $i:Int(M)-Q_m\hookrightarrow M-Q_m$ induces homotopy equivalences in the configuration space $F(M-Q_m,k)$, that is, the map \begin{equation*}
 \begin{array}{rccl}
 F(i,k):&F(Int(M)-Q_m,k)&\longrightarrow & F(M-Q_m,k)\\&(x_1,\ldots,x_k)&\mapsto
 &(x_1,\ldots,x_k)
                           \end{array}
 \end{equation*} is a homotopy equivalence. Furthermore, there is a $\Sigma_k-$equivariant deformation retraction of $F(M-Q_m,k)$ onto $F(Int(M)-Q_m,k)$. 
 \end{lemma}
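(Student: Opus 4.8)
The plan is to deduce the statement from the collar neighbourhood theorem, the only delicate point being that, on configuration spaces, every intermediate map must stay injective. First I would fix a collar $c\colon\partial M\times[0,1)\hookrightarrow M$ onto an open neighbourhood of $\partial M$. Since $Q_m$ is a finite subset of the open set $Int(M)$ while $c(\partial M\times\{0\})=\partial M$ is disjoint from $Q_m$, after shrinking and reparametrising the collar I may assume its image $C:=c(\partial M\times[0,1))$ satisfies $C\cap Q_m=\emptyset$, so $Q_m\subseteq M\setminus C\subseteq Int(M)$. Next, fixing a strictly increasing homeomorphism $\lambda\colon[0,1)\to[1/2,1)$ with $\lambda(s)=s$ for $s$ near $1$, define $\Phi\colon M\to M$ to be the identity off $C$ and $c(y,s)\mapsto c(y,\lambda(s))$ on $C$. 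Then $\Phi$ is a homeomorphism of $M$ onto the compact submanifold $M'=M\setminus c(\partial M\times[0,1/2))\subseteq Int(M)$, fixes $Q_m$ pointwise, and carries $Int(M)$ into $Int(M)$.

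The key step is to join $\Phi$ to $\mathrm{id}_M$ through embeddings. For $u\in[0,1]$ set $\lambda_u=(1-u)\,\mathrm{id}+u\lambda$ and define $\Phi_u\colon M\to M$ to be the identity off $C$ and $c(y,s)\mapsto c(y,\lambda_u(s))$ on $C$. Since $\lambda_u'=(1-u)+u\lambda'>0$, each $\lambda_u$ is strictly increasing, so each $\Phi_u$ is an embedding of $M$, equal to the identity off $C$, fixing $Q_m$, and with $\Phi_u(Int(M))\subseteq Int(M)$; moreover $\Phi_0=\mathrm{id}_M$ and $\Phi_1=\Phi$. Continuity of $(u,x)\mapsto\Phi_u(x)$ is a routine pasting argument on the open cover $\{C,\ M\setminus c(\partial M\times[0,s_0])\}$.

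Now I would apply these maps coordinatewise. For every $k\geq1$, $g:=F(\Phi,k)\colon F(M-Q_m,k)\to F(M'-Q_m,k)\subseteq F(Int(M)-Q_m,k)$ is well defined because $\Phi$ is injective and fixes $Q_m$, and $F(\Phi_\bullet,k)$ supplies, on $F(M-Q_m,k)$, a homotopy from the identity to $F(i,k)\circ g=F(\Phi,k)$ staying in $F(M-Q_m,k)$, and, on $F(Int(M)-Q_m,k)$, a homotopy from the identity to $g\circ F(i,k)=F(\Phi,k)$ staying in $F(Int(M)-Q_m,k)$ (here using $\Phi_u(Int(M))\subseteq Int(M)$); injectivity of every $\Phi_u$ together with $\Phi_u(Q_m)=Q_m$ is exactly what keeps these homotopies inside the respective configuration spaces. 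Coordinatewise application of a single self-map commutes with permuting coordinates, so all these maps and homotopies are $\Sigma_k$-equivariant, and $F(i,k)$ is a $\Sigma_k$-equivariant homotopy equivalence with inverse $g$. For the ``furthermore'' clause one engineers the collar push to be additionally stationary on $M'$ — a monotone, boundary-fixing reparametrisation in the collar direction only — which promotes $F(\Phi_\bullet,k)$ to a $\Sigma_k$-equivariant deformation retraction of $F(M-Q_m,k)$ onto the copy $F(M'-Q_m,k)$ of $F(Int(M)-Q_m,k)$ sitting inside it, along the lines of \cite{zapata2017collision}.

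I expect the real obstacle to be precisely what forces the use of an ambient isotopy rather than a naive deformation retraction of $M$: collapsing the collar — or attempting to retract $M$ directly onto $Int(M)$ — identifies collar points lying over the same point of $\partial M$, destroying distinctness of configurations, so each time-$u$ map must remain an honest embedding, which the monotone family $\lambda_u$ secures. Relatedly, because $F(Int(M)-Q_m,k)$ is dense in $F(M-Q_m,k)$, the ``deformation retraction onto $F(Int(M)-Q_m,k)$'' must be read as a deformation retraction onto a homeomorphic compact submanifold contained in it. Shrinking the collar past $Q_m$, the continuity of $\Phi_u$, and the $\Sigma_k$-equivariance are routine.
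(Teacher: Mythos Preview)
The paper does not prove this lemma; it is quoted verbatim from \cite{zapata2017collision} and used as a black box, so there is no in-paper argument to compare against. Your collar-push proof is the standard route and is correct for the homotopy-equivalence assertion: the key points---injectivity of each $\Phi_u$, $\Phi_u(Q_m)=Q_m$, $\Phi_u(Int(M))\subseteq Int(M)$, and coordinatewise $\Sigma_k$-equivariance---are exactly what one needs, and you have them.

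Your caveat about the ``furthermore'' clause is well taken and worth stating more sharply. Since $F(Int(M)-Q_m,k)$ is open and dense in $F(M-Q_m,k)$, any continuous $r\colon F(M-Q_m,k)\to F(M-Q_m,k)$ restricting to the identity on $F(Int(M)-Q_m,k)$ is the identity everywhere; hence there is literally no deformation retraction (weak or strong) of $F(M-Q_m,k)$ onto $F(Int(M)-Q_m,k)$. What your construction actually yields is a $\Sigma_k$-equivariant strong deformation retraction of $F(M-Q_m,k)$ onto the compact subspace $F(M'-Q_m,k)\subseteq F(Int(M)-Q_m,k)$, together with a $\Sigma_k$-equivariant homeomorphism $F(M-Q_m,k)\cong F(M'-Q_m,k)$, which is the honest content behind the lemma's phrasing. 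One small correction: $\Phi$ identifies $M'-Q_m$ with $M-Q_m$, not with $Int(M)-Q_m$, so $F(M'-Q_m,k)$ is a homeomorphic copy of $F(M-Q_m,k)$ sitting inside $F(Int(M)-Q_m,k)$; the equivariant homotopy equivalence $F(Int(M)-Q_m,k)\simeq F(M-Q_m,k)$ then follows.
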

 
 \begin{lemma}\label{m-d}
 Let $M$ be a connected $m-$dimensional smooth manifold having a nonempty boundary $\partial M$. Let $D\subseteq M$ be a subset homeomorphic to an $m-$dimensional ball $\mathbb{D}^m=\{x\in \mathbb{R}^m\mid~~\parallel x\parallel\leq 1\}$, lying in the interior of $M$ and such that the boundary $\partial D$ is piecewise smooth. Then the complement $M-D$ is homotopy equivalent to the wedge $M\vee \mathbb{S}^{m-1}$.
 \end{lemma}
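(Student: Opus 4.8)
The plan is to show that removing a nicely embedded closed ball $D$ from the interior of a manifold $M$ with boundary produces a space homotopy equivalent to $M$ with an $(m-1)$-sphere wedged on. First I would replace the closed ball $D$ by a slightly larger open ball neighbourhood: since $D$ lies in the interior of $M$ with piecewise smooth boundary, it has a collar-type neighbourhood $V \supsetneq D$ with $V$ again homeomorphic to $\mathbb{D}^m$, and by Lemma \ref{m-d-igual-m-p} (with $k=1$) the complement $M - D$ is homeomorphic to $M - \{p\}$, where $p$ is the center of $D$. So it suffices to prove $M - \{p\} \simeq M \vee \mathbb{S}^{m-1}$.

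Next I would exploit the boundary. Because $\partial M \neq \emptyset$ and $M$ is connected, $M$ deformation retracts onto a subcomplex of codimension at least one; more concretely, pick a point $p$ in the interior and a small open ball $B$ around $p$ with $\overline{B} \subseteq Int(M)$. Choose a path from $\partial B$ to $\partial M$, thicken it to an embedded arc's worth of $m$-ball $W$ meeting $\partial M$ in an $(m-1)$-disk; then $M$ deformation retracts onto $M - (B \cup W)$ pushing everything out through the boundary. Under this retraction, $M - \{p\}$ retracts onto $M - B$, and $M - B$ is obtained from the contractible piece $\overline{B} - B \approx \mathbb{S}^{m-1} \times (0,1]$ glued to $M - (B\cup W)$ along a collar, while the sphere $\partial B$ is pushed off itself. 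I would make this precise by a Mayer–Vietoris / gluing argument: write $M - \{p\} = (M - \overline{B}) \cup (\overline{B} - \{p\})$ with intersection an open collar $\mathbb{S}^{m-1} \times (0,1)$; the second piece deformation retracts onto $\mathbb{S}^{m-1}$, and the first piece deformation retracts (using the boundary of $M$) onto $M$ itself, with the attaching of the collar becoming a wedge because the sphere $\mathbb{S}^{m-1}$ bounds a disk on the $M$-side only up to the removed point, so the gluing circle/sphere is null-homotopic in $M - \overline B \simeq M$. Hence $M - \{p\} \simeq M \vee \mathbb{S}^{m-1}$.

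The cleanest route, which I would actually write up, is to use a handle/CW model: $M$ with nonempty boundary has the homotopy type of a CW complex of dimension $\leq m-1$ obtained by attaching cells to a point, and $M - \{p\}$ has the homotopy type of $M$ with an extra $(m-1)$-cell attached along a null-homotopic map (the boundary sphere of the removed $m$-disk, which bounds the punctured disk and is therefore contractible in the complement once we use the collar to slide it toward $\partial M$). Attaching an $(m-1)$-cell along a null-homotopic attaching map yields precisely $M \vee \mathbb{S}^{m-1}$, up to homotopy.

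The main obstacle is verifying that the attaching sphere $\partial D \cong \mathbb{S}^{m-1}$ is null-homotopic in $M - D \simeq M - \{p\}$: one must genuinely use the nonempty boundary of $M$ to push $\partial D$ off itself and contract it, since in a closed manifold this fails (e.g. $\mathbb{S}^m - \{p\}$). This is where the hypothesis $\partial M \neq \emptyset$ does all the work, and care is needed to see that after sliding the sphere to the boundary it can be capped off, making the cell-attachment trivial.
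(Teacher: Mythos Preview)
First, note that the paper itself does not prove Lemma~\ref{m-d}; it is stated without proof and then invoked in Lemma~\ref{mtimesr-p}, so there is no argument in the paper to compare yours against. Evaluating your proposal on its own terms, there is a genuine gap. After the (correct) reduction to $M-\{p\}$ via Lemma~\ref{m-d-igual-m-p}, your Mayer--Vietoris argument rests on the assertions that $M-\overline{B}\simeq M$ and that $\partial B\cong\mathbb{S}^{m-1}$ is null-homotopic in $M-\overline{B}$. Both fail already for $M=\mathbb{D}^m$: there $M-\overline{B}$ is an annulus, homotopy equivalent to $\mathbb{S}^{m-1}$ rather than to the contractible $M$, and $\partial B$ is a generator of $\pi_{m-1}(M-\overline{B})\cong\mathbb{Z}$. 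Your ``cleanest route'' inherits the same problem and adds a dimension slip: the attaching map of an $(m{-}1)$-cell is defined on $\mathbb{S}^{m-2}$, not on $\partial D\cong\mathbb{S}^{m-1}$; what is literally true is that $M$ is obtained from $M-\operatorname{int}(B)$ by attaching an $m$-cell along $\partial B$, and that attaching map is essential in the example above.

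The tube $W$ you introduce is the right ingredient, but it must be used to modify the complement rather than to contract $\partial B$. If $C=\overline{B}\cup W$ is the enlarged ball that now meets $\partial M$ in an $(m{-}1)$-disk, then it is $M-C$ (not $M-\overline{B}$) that is homotopy equivalent to $M$, because a ball touching the boundary can be pushed out through $\partial M$. One then recovers $M-\operatorname{int}(B)$ from $M-C\simeq M$ by gluing back the piece $C-\operatorname{int}(B)\simeq\mathbb{S}^{m-1}$ along a contractible region, and it is this gluing that produces the wedge summand $\mathbb{S}^{m-1}$. The hypothesis $\partial M\neq\emptyset$ enters precisely in the equivalence $M-C\simeq M$, not in any null-homotopy of $\partial B$.
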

 
 \begin{remark}
 Lemma \ref{m-d} is not true if $\partial M=\emptyset$ (for example, if $M=\mathbb{S}^1\times \mathbb{S}^1$ the torus).
 \end{remark}
 
 \begin{lemma}\label{mtimesr-p}
  Let $m,n\geq 1$. If $M$ is a compact, connected $m-$dimensional smooth manifold without boundary and $x_0\in M\times\mathbb{R}^n$, then the complement $M\times\mathbb{R}^n-\{x_0\}$ is homotopy equivalent to the wedge $M\vee \mathbb{S}^{m+n-1}$.
 \end{lemma}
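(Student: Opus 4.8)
The plan is to realize $M\times\mathbb{R}^n$ as the interior of a compact manifold with boundary and then chain together the three preceding lemmas.

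First I would fix, since $n\geq 1$, a homeomorphism $h\colon\mathbb{R}^n\to Int(\mathbb{D}^n)$ (for instance $h(v)=v/(1+\parallel v\parallel)$), so that $id_M\times h$ is a homeomorphism from $M\times\mathbb{R}^n$ onto $Int(M\times\mathbb{D}^n)$ carrying $x_0$ to some point $x_0'$. Write $N:=M\times\mathbb{D}^n$. Because $\partial M=\emptyset$, the space $N$ is a compact, connected, smooth $(m+n)$-manifold whose boundary $\partial N=M\times\mathbb{S}^{n-1}$ is nonempty (as $n\geq 1$), and it has no corners since only the $\mathbb{D}^n$ factor carries a boundary. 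Note that $x_0'$ automatically lies in $Int(N)$ because $h$ surjects onto $Int(\mathbb{D}^n)$. Hence $M\times\mathbb{R}^n-\{x_0\}$ is homeomorphic to $Int(N)-\{x_0'\}$.

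Next I would replace the deleted point by a deleted ball. Choose a smooth chart around $x_0'$ contained in $Int(N)$, and inside it a round ball $D$ centered at $x_0'$ together with a slightly larger concentric round ball $V$ with $D\subsetneq V\subseteq Int(N)$; then $\partial D$ is smooth, hence piecewise smooth. Applying Lemma \ref{inavarianca-bordo} to $N$ with $k=1$ and $Q_1=\{x_0'\}$ gives that the inclusion $Int(N)-\{x_0'\}\hookrightarrow N-\{x_0'\}$ is a homotopy equivalence, and applying Lemma \ref{m-d-igual-m-p} to $N$ with the single ball $D$ gives a homeomorphism $N-D\cong N-\{x_0'\}$. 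Combining with the previous paragraph, $M\times\mathbb{R}^n-\{x_0\}$ is homotopy equivalent to $N-D$.

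Then I would invoke Lemma \ref{m-d} for $N$ and $D$ — legitimate since $N$ is a connected smooth $(m+n)$-manifold with nonempty boundary and $D$ lies in $Int(N)$ with piecewise smooth boundary — to obtain $N-D\simeq N\vee\mathbb{S}^{m+n-1}$. Finally, $N=M\times\mathbb{D}^n$ deformation retracts onto $M\times\{0\}\cong M$; choosing base points compatibly along this retraction and using that $M$ (a compact smooth manifold, hence having the homotopy type of a well-pointed CW complex) and $\mathbb{S}^{m+n-1}$ are well-pointed, the wedge construction is homotopy invariant, so $N\vee\mathbb{S}^{m+n-1}\simeq M\vee\mathbb{S}^{m+n-1}$. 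Concatenating all the equivalences yields $M\times\mathbb{R}^n-\{x_0\}\simeq M\vee\mathbb{S}^{m+n-1}$.

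The only step demanding real care is the smooth-structure bookkeeping in the first paragraph: checking that $M\times\mathbb{D}^n$ is genuinely a smooth manifold with (corner-free) nonempty boundary, so that Lemmas \ref{inavarianca-bordo}, \ref{m-d-igual-m-p} and \ref{m-d} all apply verbatim, and that the deleted point may be assumed interior. Once that is in place the argument is a formal splicing of the quoted lemmas, and the final homotopy invariance of the wedge is routine.
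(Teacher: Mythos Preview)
Your argument is correct and follows essentially the same route as the paper: identify $M\times\mathbb{R}^n$ with $Int(M\times\mathbb{D}^n)$, then apply Lemma~\ref{inavarianca-bordo}, Lemma~\ref{m-d-igual-m-p}, and Lemma~\ref{m-d} in that order, finishing with the deformation retraction $M\times\mathbb{D}^n\simeq M$. You are in fact a bit more careful than the paper about the identification $M\times\mathbb{R}^n\cong Int(M\times\mathbb{D}^n)$ and the well-pointedness needed for homotopy invariance of the wedge, but the strategy is the same.
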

 \begin{proof}
By Lemma \ref{inavarianca-bordo}, the complement  $M\times\mathbb{R}^n -\{x_0\}$ is homotopy equivalent to the complement $M\times\mathbb{D}^n -\{x_0\}$ (here we recall that $x_0\in Int(M\times\mathbb{D}^n)$). Let $D\subseteq M\times\mathbb{D}^n$ be a subset homeomorphic to an $(m+n)-$dimensional ball $\mathbb{D}^{m+n}=\{x\in \mathbb{R}^{m+n}\mid~~\parallel x\parallel\leq 1\}$, lying in the interior of $M\times\mathbb{D}^n$ and  such that the boundary $\partial D$ is piecewise smooth, $x_0\in D$ is the centre and $D$ has a neighbourhood $V\subseteq Int(M\times\mathbb{D}^n)$ with $D\subsetneq V$, where $V$ is also homeomorphic to $\mathbb{D}^{m+n}$. By Lemma \ref{m-d-igual-m-p} the complement $M\times\mathbb{D}^n -\{x_0\}$ is homeomorphic to $M\times\mathbb{D}^n-D$. Moreover, by Lemma \ref{m-d} the complement $M\times\mathbb{D}^n-D$ is homotopy equivalent to the wedge $(M\times\mathbb{D}^n)\vee \mathbb{S}^{m+n-1}$ and it is homotopy equivalent to $M\vee \mathbb{S}^{m+n-1}$. Therefore, the complement $M\times\mathbb{R}^n-\{x_0\}$ is homotopy equivalent to the wedge $M\vee \mathbb{S}^{m+n-1}$.
 \end{proof}
 
\begin{proposition}\label{homotopy-type}
Let $G$ be a connected, compact Lie group, $m=dim(G)\geq 1$ and $n\geq 1$. Then the configuration space $F(G\times\mathbb{R}^n,2)$ has the homotopy type of the product $G\times (G\vee \mathbb{S}^{m+n-1}):=\overline{F}(G\times\mathbb{R}^n,2)$.
\end{proposition}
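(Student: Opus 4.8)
The plan is to exploit Remark \ref{configu-group}, which reduces the configuration space of a topological group to a product. Since $G\times\mathbb{R}^n$ is a topological group (being a product of the Lie group $G$ and the additive group $\mathbb{R}^n$), with identity $(e,0)$, the remark (applied with $k=1$) gives a homeomorphism
\[
F(G\times\mathbb{R}^n,2)\;\cong\;(G\times\mathbb{R}^n)\times\bigl((G\times\mathbb{R}^n)-\{(e,0)\}\bigr).
\]
So the first step is to record this identification explicitly.

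The second step is to simplify each factor up to homotopy. The factor $G\times\mathbb{R}^n$ deformation retracts onto $G$, so it has the homotopy type of $G$. For the other factor, I would apply Lemma \ref{mtimesr-p} with $M=G$: since $G$ is a compact, connected smooth manifold without boundary of dimension $m\geq 1$, and $(e,0)\in G\times\mathbb{R}^n$, the complement $(G\times\mathbb{R}^n)-\{(e,0)\}$ is homotopy equivalent to the wedge $G\vee\mathbb{S}^{m+n-1}$. Here I should check the hypothesis $n\geq 1$ needed by Lemma \ref{mtimesr-p}, which is part of our standing assumptions.

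The third step is to assemble these: by homotopy invariance of products (a homotopy equivalence on each factor induces one on the product), $F(G\times\mathbb{R}^n,2)$ has the homotopy type of $G\times(G\vee\mathbb{S}^{m+n-1})$, which is precisely $\overline{F}(G\times\mathbb{R}^n,2)$. This completes the argument.

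The argument is essentially a concatenation of the preliminary lemmas, so there is no single hard step; the only point requiring a little care is verifying that $G\times\mathbb{R}^n$ genuinely satisfies the hypotheses of Lemma \ref{mtimesr-p} under the guise $M\times\mathbb{R}^n$ with $M=G$ — namely that $G$ is compact, connected, smooth, without boundary, and of dimension $m\geq 1$ — all of which hold by assumption. One should also note that Lemma \ref{mtimesr-p} is stated for a point $x_0$ in $M\times\mathbb{R}^n$, and by homogeneity of the group the choice $x_0=(e,0)$ is harmless, so the homotopy type does not depend on which point is removed.
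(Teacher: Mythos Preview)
Your proof is correct and follows essentially the same route as the paper: apply Remark~\ref{configu-group} to the topological group $G\times\mathbb{R}^n$ and then invoke Lemma~\ref{mtimesr-p} with $M=G$ to identify the punctured factor with $G\vee\mathbb{S}^{m+n-1}$. The only cosmetic difference is that the paper collapses the first factor $G\times\mathbb{R}^n\simeq G$ in the same breath, whereas you spell it out as a separate step.
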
 
\begin{proof}
$F(G\times\mathbb{R}^n,2)$ is homotopy equivalent to $G\times (G\times\mathbb{R}^n-\{(e,0)\})$ (see Remark \ref{configu-group}). By Lemma \ref{mtimesr-p}, $F(G\times\mathbb{R}^n,2)$ is homotopy equivalent to $G\times (G\vee \mathbb{S}^{m+n-1})$.
\end{proof}

\begin{lemma}\label{cat-GxRn-2}
Let $\mathbb{K}$ be a field and  $G$ a connected, compact Lie group with $cup_{\mathbb{K}}(G)\geq 1$. Then 
\begin{equation*}
1+2cup_{\mathbb{K}}(G)\leq \text{cat}(F(G\times\mathbb{R}^n,2))\leq 2\text{cat}(G)-1, \text{ for any } n\geq 1.
\end{equation*}
\end{lemma}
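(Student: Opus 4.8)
The plan is to move to the homotopy model $\overline{F}(G\times\mathbb{R}^n,2)=G\times(G\vee\mathbb{S}^{m+n-1})$ provided by Proposition \ref{homotopy-type} (with $m=\dim G$), use the homotopy invariance of $\text{cat}$, and then feed everything through the multiplicative estimates already at hand. Observe first that the hypothesis $cup_{\mathbb{K}}(G)\geq 1$ forces $G$ to be nontrivial, so $m\geq 1$ and hence $m+n-1\geq 1$; in particular $cup_{\mathbb{K}}(\mathbb{S}^{m+n-1})=1$ and $\text{cat}(\mathbb{S}^{m+n-1})=2$.

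For the lower bound I would apply Proposition \ref{prop-1}(1), reducing matters to computing $cup_{\mathbb{K}}$ of the product: $\text{cat}(F(G\times\mathbb{R}^n,2))=\text{cat}(G\times(G\vee\mathbb{S}^{m+n-1}))\geq 1+cup_{\mathbb{K}}(G\times(G\vee\mathbb{S}^{m+n-1}))$. Since $G$ is a compact manifold, $H^k(G;\mathbb{K})$ is finite dimensional in each degree, so Lemma \ref{prop-cup-length}(1) gives $cup_{\mathbb{K}}(G\times(G\vee\mathbb{S}^{m+n-1}))=cup_{\mathbb{K}}(G\vee\mathbb{S}^{m+n-1})+cup_{\mathbb{K}}(G)$, and Lemma \ref{prop-cup-length}(2), applied to the CW complexes $G$ and $\mathbb{S}^{m+n-1}$, gives $cup_{\mathbb{K}}(G\vee\mathbb{S}^{m+n-1})=\max\{cup_{\mathbb{K}}(G),1\}=cup_{\mathbb{K}}(G)$. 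Hence $cup_{\mathbb{K}}(\overline{F}(G\times\mathbb{R}^n,2))=2cup_{\mathbb{K}}(G)$ and $\text{cat}(F(G\times\mathbb{R}^n,2))\geq 1+2cup_{\mathbb{K}}(G)$.

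For the upper bound I would use the product inequality for category (in the normalized convention used here, $\text{cat}(X\times Y)\leq\text{cat}(X)+\text{cat}(Y)-1$) together with the wedge formula $\text{cat}(X\vee Y)=\max\{\text{cat}(X),\text{cat}(Y)\}$ valid for well-pointed spaces. This yields $\text{cat}(\overline{F}(G\times\mathbb{R}^n,2))\leq\text{cat}(G)+\text{cat}(G\vee\mathbb{S}^{m+n-1})-1=\text{cat}(G)+\max\{\text{cat}(G),2\}-1$, and since Proposition \ref{prop-1}(1) gives $\text{cat}(G)\geq 1+cup_{\mathbb{K}}(G)\geq 2$, the maximum equals $\text{cat}(G)$, so $\text{cat}(F(G\times\mathbb{R}^n,2))\leq 2\text{cat}(G)-1$.

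The two facts used beyond the previous section --- the product inequality and the wedge formula for $\text{cat}$ --- are classical (see \cite{cornea2003lusternik}), but I expect the wedge formula $\text{cat}(X\vee Y)=\max\{\text{cat}X,\text{cat}Y\}$ to be the step most worth stating precisely: it requires the wedge summands to be well-pointed, which holds here because $G$ and $\mathbb{S}^{m+n-1}$ admit CW structures, and it is exactly what makes the sphere summand $\mathbb{S}^{m+n-1}$ contribute nothing to the upper estimate. A minor bookkeeping point is to confirm the hypotheses of Lemma \ref{prop-cup-length}: finiteness of $H^{k}(G;\mathbb{K})$ in each degree (true since $G$ is a compact manifold) and the CW condition on $G$ and $\mathbb{S}^{m+n-1}$.
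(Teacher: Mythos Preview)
Your argument is correct and follows essentially the same route as the paper: reduce to the model $G\times(G\vee\mathbb{S}^{m+n-1})$ via Proposition~\ref{homotopy-type}, bound below by cup-length through Proposition~\ref{prop-1} and Lemma~\ref{prop-cup-length}, and bound above via the product inequality and the wedge formula for $\text{cat}$. Your write-up is in fact a bit more careful than the paper's, since you explicitly verify $\text{cat}(G)\geq 2$ so that $\max\{\text{cat}(G),2\}=\text{cat}(G)$, and you check the finiteness and CW hypotheses needed for Lemma~\ref{prop-cup-length}.
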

\begin{proof}
 The category cat$(X)$ is a homotopy invariant of $X$, so by Proposition \ref{homotopy-type}, cat$(F(G\times\mathbb{R}^n,2))=$ cat$(G\times (G\vee \mathbb{S}^{m+n-1}))$. 
On the other hand, by (\cite{james1978category}, proposition 2.3), cat$(X\times Y)\leq$ cat$(X)$ $+$ cat$(Y)-1$. Thus \[cat(F(G\times\mathbb{R}^n,2))\leq \text{cat}(G) + \text{cat}(G\vee \mathbb{S}^{m+n-1})-1.\] Furthermore, cat$(X\vee Y)=\max\{\text{cat}(X),$ cat$(Y)\}$ and so \[\text{cat}(F(G\times\mathbb{R}^n,2))\leq 2\text{cat}(G)-1.\]

By Proposition \ref{prop-1}, cat$(F(G\times\mathbb{R}^n,2))\geq cup_{\mathbb{K}}(G\times (G\vee \mathbb{S}^{m+n-1}))+1$. Therefore, by Lemma \ref{prop-cup-length},  cat$(F(G\times\mathbb{R}^n,2))\geq 1+2cup_{\mathbb{K}}(G)$, as required. 
\end{proof}

\begin{remark} Let $G$ be a Lie group, as mentioned in Lemma \ref{cat-GxRn-2}. Then
  cat$(G)=cup_{\mathbb{K}}(G)+1$ if and only if  $1+2cup_{\mathbb{K}}(G)=$ cat$(F(G\times\mathbb{R}^n),2))=2$cat$(G)-1$.
\end{remark}

Thus, we have the following theorem.

\begin{theorem}\label{theorem-cat}
Let $\mathbb{K}$ be a field and let $G$ be a connected, compact Lie group such that $cup_{\mathbb{K}}(G)\geq 1$ and cat$(G)=cup_{\mathbb{K}}(G)+1$. Then 
\begin{equation*}
\text{cat}(F(G\times\mathbb{R}^n,2))= 2\text{cat}(G)-1, \text{ for any } n\geq 1. 
\end{equation*} Furthermore, cat$(F(G\times\mathbb{R}^n,2))=cup_{\mathbb{K}}(F(G\times\mathbb{R}^n,2))+1$.
\end{theorem}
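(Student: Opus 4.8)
The plan is to deduce Theorem~\ref{theorem-cat} directly from Lemma~\ref{cat-GxRn-2} and the accompanying Remark, using the two hypotheses $cup_{\mathbb{K}}(G)\geq 1$ and $\mathrm{cat}(G)=cup_{\mathbb{K}}(G)+1$. Lemma~\ref{cat-GxRn-2} already gives the two-sided bound
\[
1+2cup_{\mathbb{K}}(G)\leq \mathrm{cat}(F(G\times\mathbb{R}^n,2))\leq 2\,\mathrm{cat}(G)-1
\]
for every $n\geq 1$, so the entire content of the theorem is that under the stated hypotheses these two bounds coincide.

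First I would substitute $\mathrm{cat}(G)=cup_{\mathbb{K}}(G)+1$ into the upper bound: this yields $2\,\mathrm{cat}(G)-1=2(cup_{\mathbb{K}}(G)+1)-1=2cup_{\mathbb{K}}(G)+1$, which is exactly the lower bound. Hence the inequalities are forced to be equalities, giving $\mathrm{cat}(F(G\times\mathbb{R}^n,2))=2\,\mathrm{cat}(G)-1$ for all $n\geq 1$. This is precisely the equivalence recorded in the Remark following Lemma~\ref{cat-GxRn-2}, so one could alternatively just cite that Remark.

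For the ``furthermore'' clause, I would combine the just-established value with the homotopy type from Proposition~\ref{homotopy-type}, namely $F(G\times\mathbb{R}^n,2)\simeq G\times(G\vee\mathbb{S}^{m+n-1})$, together with Lemma~\ref{prop-cup-length}. Since $H^k(G\vee\mathbb{S}^{m+n-1};\mathbb{K})$ is finite-dimensional in each degree, part (1) of Lemma~\ref{prop-cup-length} gives $cup_{\mathbb{K}}(G\times(G\vee\mathbb{S}^{m+n-1}))=cup_{\mathbb{K}}(G)+cup_{\mathbb{K}}(G\vee\mathbb{S}^{m+n-1})$, and part (2) gives $cup_{\mathbb{K}}(G\vee\mathbb{S}^{m+n-1})=\max\{cup_{\mathbb{K}}(G),cup_{\mathbb{K}}(\mathbb{S}^{m+n-1})\}=\max\{cup_{\mathbb{K}}(G),1\}=cup_{\mathbb{K}}(G)$, the last equality using the hypothesis $cup_{\mathbb{K}}(G)\geq 1$. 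Therefore $cup_{\mathbb{K}}(F(G\times\mathbb{R}^n,2))=2cup_{\mathbb{K}}(G)$, so $cup_{\mathbb{K}}(F(G\times\mathbb{R}^n,2))+1=1+2cup_{\mathbb{K}}(G)=\mathrm{cat}(F(G\times\mathbb{R}^n,2))$, as claimed.

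I do not expect a genuine obstacle here: the theorem is essentially a bookkeeping corollary of the lemmas already proved, and the only points requiring a little care are that the wedge summand $\mathbb{S}^{m+n-1}$ really contributes cup-length exactly $1$ (so it is dominated by $G$'s contribution, which is where $cup_{\mathbb{K}}(G)\geq 1$ enters) and that the finiteness hypothesis in Lemma~\ref{prop-cup-length}(1) is met — both of which are immediate. If anything deserves a sentence of justification, it is invoking homotopy invariance of $\mathrm{cat}$ and of $cup_{\mathbb{K}}$ to transport the computation along the homotopy equivalence of Proposition~\ref{homotopy-type}, exactly as was already done inside the proof of Lemma~\ref{cat-GxRn-2}.
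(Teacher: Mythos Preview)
Your proposal is correct and matches the paper's approach: the paper presents Theorem~\ref{theorem-cat} as an immediate consequence of Lemma~\ref{cat-GxRn-2} and the preceding Remark (without writing out a separate proof), and your argument simply makes explicit the substitution $\mathrm{cat}(G)=cup_{\mathbb{K}}(G)+1$ that collapses the two bounds and the cup-length computation via Proposition~\ref{homotopy-type} and Lemma~\ref{prop-cup-length} already implicit in the proof of Lemma~\ref{cat-GxRn-2}.
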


\begin{example}
Since cat$(SO(m))=cup_{\mathbb{Z}_2}(SO(m))+1$, for $m=2,3,4,\ldots,10$ (see \cite{iwase2005lusternik}, \cite{iwase2016lusternik}), \begin{equation*} \text{cat}(F(SO(m)\times\mathbb{R}^n),2))=2\text{cat}(SO(m))-1, \text{ for } m=2,3,4,\ldots,10.\end{equation*}
\end{example}
 
\subsection{Topological complexity of wedges}

In general, there is no formula known to compute the topological complexity TC$(X\vee Y)$. However, Dranishnikov and Sadykov \cite{dranishnikov2017topological} proved the following statement. 

\begin{lemma}(\cite{dranishnikov2017topological}, Theorem 6)\label{dranishnikov-tc-wedges}
Let $d = \max\{dim~X, dim~Y\}$ for connected CW-complexes $X$ and $Y$. Suppose that $\max\{\text{TC}(X), \text{TC}(Y), \text{cat}(X\times Y)\}\geq d+2$. Then
\[\text{TC}(X\vee Y ) = \max\{\text{TC}(X), \text{TC}(Y), \text{cat}(X \times Y )\}.\]
\end{lemma}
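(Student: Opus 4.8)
The plan is to prove the stated equality by establishing the two inequalities separately; the hypothesis $\max\{\text{TC}(X),\text{TC}(Y),\text{cat}(X\times Y)\}\ge d+2$ is needed only for ``$\le$''. Write $Z=X\vee Y$ with wedge point $*$, let $\iota_X\colon X\hookrightarrow Z$, $\iota_Y\colon Y\hookrightarrow Z$ be the inclusions and $r_X\colon Z\to X$, $r_Y\colon Z\to Y$ the retractions collapsing the opposite summand to $*$. Let $\pi\colon PZ\to Z\times Z$, $\pi(\gamma)=(\gamma(0),\gamma(1))$, be the path fibration, so $\text{TC}(Z)=\text{secat}(\pi)$ in the normalization used here (where $\text{TC}$ of a contractible space is $1$).

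\emph{Lower bound.} Since $X$ and $Y$ are retracts of $Z$, pulling a cover of $Z\times Z$ with local sections back along $\iota_X\times\iota_X$ and post-composing the sections with $r_X$ yields a motion planner on $X$ with no more pieces; thus $\text{TC}(X)\le\text{TC}(Z)$ and $\text{TC}(Y)\le\text{TC}(Z)$. For $\text{cat}(X\times Y)\le\text{TC}(Z)$, restrict $\pi$ over the closed subspace $\iota_X(X)\times\iota_Y(Y)$: a covering of $Z\times Z$ by sets with sections restricts to one of this subspace, so $\text{secat}(\pi)\ge\text{secat}(\pi|_{X\times Y})$. If $s\colon U\to PZ$ is a section over open $U\subseteq X\times Y$, then $(u,t)\mapsto s(u)(t)$ is a homotopy in $Z$ from $\iota_X\circ p_1$ to $\iota_Y\circ p_2$ on $U$ (here $p_1,p_2$ are the projections of $X\times Y$); composing with $r_X$ and with $r_Y$ kills one summand each, so $p_1|_U$ and $p_2|_U$ are null-homotopic, i.e.\ $U$ is contractible in $X\times Y$. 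Hence $\text{secat}(\pi|_{X\times Y})\ge\text{cat}(X\times Y)$, completing the lower bound; no dimension hypothesis enters here.

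\emph{Upper bound.} Put $N=\max\{\text{TC}(X),\text{TC}(Y),\text{cat}(X\times Y)\}$ and write $Z\times Z$ as the union of the four closed subcomplexes $X\times X$, $Y\times Y$, $X\times Y$, $Y\times X$. Over $X\times X$ one has $\text{secat}(\pi|_{X\times X})\le\text{TC}(X)\le N$, by pushing an optimal motion planner of $X$ forward along $PX\hookrightarrow PZ$; similarly over $Y\times Y$. Over $X\times Y$ (and symmetrically $Y\times X$), concatenating the contracting homotopy of a categorical open subset with two fixed paths joining the relevant centres to $*$ gives local sections, so $\text{secat}(\pi|_{X\times Y})\le\text{cat}(X\times Y)\le N$. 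Now invoke Schwarz's theorem that $\text{secat}(\pi)\le N$ if and only if the $N$-fold fibrewise join $\pi^{*N}$ of $\pi$ admits a global section; its fibre is the $N$-fold join of a space homotopy equivalent to $\Omega Z$, hence $(N-2)$-connected (a join of $N$ nonempty spaces is $(N-2)$-connected, so a non-trivial $\pi_1(Z)$ is harmless). By the above, $\pi^{*N}$ has a section over each of the four pieces. The crucial point: when these are glued one at a time, every intersection that appears --- $X\times\{*\}$, $\{*\}\times Y$, $\{(*,*)\}$, and unions of these --- is a subcomplex of dimension $\le d$, and $d\le N-2$ by hypothesis; since the fibre is $(N-2)$-connected, obstruction theory makes any two sections of $\pi^{*N}$ over such a subcomplex vertically homotopic, so using the homotopy extension property for the subcomplex pair one modifies a section to agree with another on the overlap and pastes. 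Iterating over the four pieces produces a section of $\pi^{*N}$ over $Z\times Z$, i.e.\ $\text{TC}(Z)\le N$.

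\emph{Main obstacle.} The heart of the matter is this inductive gluing in the upper bound. One has to (a) identify the sectional category over each of the four strata with the claimed invariant --- in particular establish $\text{secat}(\pi|_{X\times Y})=\text{cat}(X\times Y)$, whose ``$\le$'' half is the concatenation construction above --- and (b) carry the obstruction-theoretic patching, which is precisely where $N\ge d+2$ is consumed, through the inductive gluing, verifying at each stage that the overlap is at most $(N-2)$-dimensional and is dominated by the connectivity of the fibre. A minor but genuine subtlety is that $(\Omega Z)^{*N}$ must be $(N-2)$-connected even though $\Omega Z$ is usually disconnected; this is automatic, since a join of $N$ nonempty spaces is always $(N-2)$-connected.
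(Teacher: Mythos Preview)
The paper does not give its own proof of this lemma: it is quoted verbatim as Theorem~6 of \cite{dranishnikov2017topological} and used as a black box, so there is nothing in the paper to compare your argument against line by line.

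That said, your sketch is a faithful outline of the Dranishnikov--Sadykov proof. The lower bound is standard: retraction onto each summand gives $\text{TC}(X),\text{TC}(Y)\le\text{TC}(X\vee Y)$, and your argument for $\text{cat}(X\times Y)\le\text{TC}(X\vee Y)$ via applying the two retractions to a local section is exactly the usual one. For the upper bound, the decomposition of $(X\vee Y)^2$ into the four blocks, the identification of $\text{secat}(\pi)$ over each block with $\text{TC}(X)$, $\text{TC}(Y)$ or $\text{cat}(X\times Y)$, the passage to the $N$-fold fibrewise join $\pi^{*N}$ with $(N-2)$-connected fibre, and the obstruction-theoretic patching over overlaps of dimension at most $d\le N-2$ are precisely the ingredients of the original argument. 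Your remark that the join of $N$ nonempty spaces is $(N-2)$-connected regardless of whether $\Omega Z$ is connected is the right way to dispose of the $\pi_1$ issue.

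One point worth tightening if you were to write this out in full: the patching step needs the pairs (union of blocks already assembled, new overlap) to have the homotopy extension property, which is clear here since everything is a subcomplex of $(X\vee Y)^2$, but you should say so explicitly rather than leave it implicit in ``using the homotopy extension property for the subcomplex pair''. Apart from that, the proposal is correct and matches the source the paper cites.
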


Suppose $X=G$ is a connected noncontractible Lie group and $Y=\mathbb{S}^m$ are such that $dim(G)< m$ and cat$(G\times \mathbb{S}^m)=$ cat$(G)+1$. Then we cannot use Lemma \ref{dranishnikov-tc-wedges}, since  $\max\{\text{TC}(G), \text{TC}( \mathbb{S}^m), \text{cat}(G\times  \mathbb{S}^m)\}= \text{cat}(G)+1\leq dim(G)+2<m+2$. However, using Dranishnikov's method from (\cite{dranishnikov2014topological}, Theorem 3.6), we will prove the following result.

\begin{proposition}\label{tc-wedge}
Suppose that $X$ be a connected finite CW complex such that $\text{cat}(X)=\text{TC}(X)=\text{TC}^M(X)$ and $\text{cat}(X\times \mathbb{S}^m)=\text{cat}(X)+1$. (The last condition holds, for example, if $X$ satisfies Ganea's conjecture). Then
\[\text{TC}(X\vee \mathbb{S}^m)=\text{TC}(X)+1.\] Furthermore, $\text{TC}(X\vee \mathbb{S}^m)=\text{TC}^M(X\vee \mathbb{S}^m)=\max\{\text{TC}(X),\text{TC}(\mathbb{S}^m),\text{cat}(X\times \mathbb{S}^m)\}$.
\end{proposition}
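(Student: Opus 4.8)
The plan is to reduce the statement to two inequalities and then prove the harder one by an explicit construction. We may assume $X$ is not contractible, so that
\[
c:=\text{cat}(X)=\text{TC}(X)=\text{TC}^M(X)\geq 2
\]
(if $X$ is contractible then $X\vee\mathbb{S}^m\simeq\mathbb{S}^m$ and the statement is checked by hand). Since $\text{TC}(\mathbb{S}^m)=\text{TC}^M(\mathbb{S}^m)\leq 3\leq c+1$ and, by hypothesis, $\text{cat}(X\times\mathbb{S}^m)=c+1$, we have
\[
\max\{\text{TC}(X),\text{TC}(\mathbb{S}^m),\text{cat}(X\times\mathbb{S}^m)\}=c+1=\text{TC}(X)+1 .
\]
Write $W=X\vee\mathbb{S}^m$. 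As $\text{TC}(W)\leq\text{TC}^M(W)$ always holds, it suffices to prove the two bounds $c+1\leq\text{TC}(W)$ and $\text{TC}^M(W)\leq c+1$; together they give $c+1\leq\text{TC}(W)\leq\text{TC}^M(W)\leq c+1$, whence all the asserted equalities.

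For the lower bound I would invoke the inequality $\text{TC}(X\vee Y)\geq\text{cat}(X\times Y)$, which holds for all connected CW complexes $X,Y$; in (\cite{dranishnikov2017topological}, Theorem 6) the dimension hypothesis is needed only for the reverse inequality. (The bound reflects the fact that a path in $W$ joining a point of $X$ to a point of $\mathbb{S}^m$ must run through the wedge point, so that the path fibration $PW\to W\times W$ restricted over $X\times\mathbb{S}^m$ has sectional category equal to $\text{cat}(X\times\mathbb{S}^m)$.) Hence $\text{TC}(W)\geq\text{cat}(X\times\mathbb{S}^m)=c+1$.

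The substance of the proof is the upper bound $\text{TC}^M(W)\leq c+1$, which I would obtain by adapting the construction in the proof of (\cite{dranishnikov2014topological}, Theorem 3.6). The inputs are: (i) a reserved motion planner $\{(A_i,\sigma_i)\}_{i=1}^{c}$ on $X$, from $\text{TC}^M(X)=c$; (ii) a reserved motion planner $\{(B_j,\tau_j)\}_{j=1}^{c+1}$ on $\mathbb{S}^m$ (the standard one uses $\leq 3\leq c+1$ open sets; pad with empty sets), where the $\sigma_i$ and $\tau_j$ are regarded as valued in $PX\subseteq PW$, resp.\ $P\mathbb{S}^m\subseteq PW$; and (iii) a categorical open cover $\{C_k\}_{k=1}^{c+1}$ of $X\times\mathbb{S}^m$ by sets contractible in $X\times\mathbb{S}^m$, which we may take to be contractible to the wedge point $(x_0,x_0)$, so that contracting $C_k$ produces continuous maps $C_k\to PX\times P\mathbb{S}^m$, $(x,y)\mapsto(\alpha_k(x,y),\beta_k(x,y))$, with $\alpha_k(x,y)$ a path from $x$ to $x_0$ in $X$ and $\beta_k(x,y)$ a path from $y$ to $x_0$ in $\mathbb{S}^m$. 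Now $W\times W$ is the union of the four closed quadrants $X\times X$, $X\times\mathbb{S}^m$, $\mathbb{S}^m\times X$, $\mathbb{S}^m\times\mathbb{S}^m$, which overlap along the axes $X\times\{x_0\}$, $\{x_0\}\times X$, $\mathbb{S}^m\times\{x_0\}$, $\{x_0\}\times\mathbb{S}^m$ and at $(x_0,x_0)$. For $k=1,\dots,c+1$ the goal is an open set $\mathcal{W}_k\subseteq W\times W$ carrying a reserved section $S_k\colon\mathcal{W}_k\to PW$ that, on suitable open neighbourhoods (taken in $W\times W$) of $A_k$, of $B_k$, of $C_k$ and of $C_k^{\mathrm{op}}=\{(y,x)\colon(x,y)\in C_k\}$, restricts respectively to $\sigma_k$, to $\tau_k$, to $(x,y)\mapsto\alpha_k(x,y)\cdot\overline{\beta_k(x,y)}$ (a path in $W$ running $x\to x_0\to y$), and to the reversed path. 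Because $\{A_i\}$, $\{B_j\}$, $\{C_k\}$ cover the respective quadrants, $\{\mathcal{W}_k\}_{k=1}^{c+1}$ then covers $W\times W$; and if the neighbourhoods are arranged so that $S_k$ coincides with $\sigma_k$, resp.\ $\tau_k$, near the diagonal, then each $S_k$ is reserved, since the $\sigma_i,\tau_j$ are reserved and $\Delta_W$ meets $X\times\mathbb{S}^m$ and $\mathbb{S}^m\times X$ only at $(x_0,x_0)$.

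The main obstacle is exactly the gluing of these four partial prescriptions into one continuous, reserved section $S_k$ along the overlaps, above all near the wedge point $(x_0,x_0)$ where all four quadrants meet: on the axis $X\times\{x_0\}$ the $X$-prescription gives $\sigma_k(x,x_0)$ while the $C_k$-prescription gives $\alpha_k(x,x_0)\cdot\overline{\beta_k(x,x_0)}$ with $\beta_k(x,x_0)$ a loop in $\mathbb{S}^m$ at $x_0$, and these do not coincide. Reconciling them --- by choosing the four neighbourhoods carefully and reparametrizing, shrinking to the region where $\beta_k$ may be taken constant near the $\mathbb{S}^m$-basepoint and using a fixed contractible neighbourhood of $x_0$ in $W$ --- is the technical heart of Dranishnikov's method; it is here that the hypotheses $\text{TC}(X)=\text{TC}^M(X)$ and $\text{TC}(\mathbb{S}^m)=\text{TC}^M(\mathbb{S}^m)$ (which furnish reserved planners keeping paths constant over the diagonal) are essential. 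Once the $\mathcal{W}_k$ have been assembled, they constitute a reserved motion planner on $W$ with $c+1$ open sets, giving $\text{TC}^M(W)\leq c+1$ and, with the lower bound, the theorem.
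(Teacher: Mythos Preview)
Your lower bound is fine and matches the paper's: the inequality $\text{TC}(X\vee Y)\geq\text{cat}(X\times Y)$ holds without any dimension hypothesis, for exactly the reason you indicate.

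The gap is in the upper bound. You propose to take an \emph{arbitrary} categorical cover $\{C_k\}_{k=1}^{c+1}$ of $X\times\mathbb{S}^m$ and then glue the resulting sections to the reserved planners $\sigma_k$ on $X\times X$ and $\tau_k$ on $\mathbb{S}^m\times\mathbb{S}^m$ along the axes. But this is not what Dranishnikov's construction in \cite{dranishnikov2014topological} does, and the appeal to his method does not justify the gluing you need. In his proof the cover of the off-diagonal quadrant $X\times Y$ is \emph{built from} the categorical covers of $X$ and $Y$ separately, as products $U_k\times V_k$, and the accompanying deformations are constructed to be \emph{face-preserving}: on $X\times\{y_0\}$ the deformation of $U_k\times V_k$ agrees with the deformation of $\widetilde{U}_k$, and similarly on $\{x_0\}\times Y$. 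That compatibility is what makes the four pieces patch along the axes and at $(x_0,x_0)$. An arbitrary contraction of $C_k$ to $(x_0,x_0)$ carries no such compatibility with $\sigma_k$ or $\tau_k$, and the informal fix you sketch (``shrinking to the region where $\beta_k$ may be taken constant'') does not by itself force $C_k\cap(X\times\{x_0\})$ to coincide with, or be contained in, the trace of $A_k$ on that axis, nor make the two induced paths from $x$ to $x_0$ agree. So the step ``the deformations\ldots all agree on $O_k\cap C$'' is simply not available with your data.

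The paper proceeds differently. For $m$ odd, Dranishnikov's inequality $\text{TC}^M(X\vee\mathbb{S}^m)\leq\text{TC}^M(X)+\text{TC}^M(\mathbb{S}^m)-1$ together with $\text{TC}^M(\mathbb{S}^m)=2$ already gives $c+1$. For $m$ even that bound gives only $c+2$; the paper then \emph{re-enters} Dranishnikov's construction with the explicit three-set reserved planner on $\mathbb{S}^m$ and observes that the third set $\widetilde{V}_2=(\mathbb{S}^m\setminus C)\times(\mathbb{S}^m\setminus C)$ has empty intersection with both axes $C\times\mathbb{S}^m$ and $\mathbb{S}^m\times C$. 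Consequently, in the product cover $W_k=U_k\times V_k$ of $X\times\mathbb{S}^m$ one of the $c+2$ sets is empty and can be discarded, yielding a face-compatible cover of $X\times\mathbb{S}^m$ by $c+1$ sets and hence $\text{TC}^M(X\vee\mathbb{S}^m)\leq c+1$. Note in particular that the hypothesis $\text{cat}(X\times\mathbb{S}^m)=\text{cat}(X)+1$ is used only for the lower bound; the upper bound comes from this explicit saving in the even case, not from a direct categorical cover of the product.
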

\begin{proof}
It is well known that
$\text{cat}(X\times \mathbb{S}^m)\leq \text{TC}(X\vee \mathbb{S}^m)$ and so $\text{TC}(X)+1\leq \text{TC}(X\vee \mathbb{S}^m)$. So it is sufficient to show  $\text{TC}(X\vee \mathbb{S}^m)\leq \text{TC}(X)+1$. Indeed, by  (\cite{dranishnikov2014topological}, Theorem 3.6), $\text{TC}(X\vee \mathbb{S}^m)\leq \text{TC}^M(X\vee \mathbb{S}^m)\leq \text{TC}^M(X)+\text{TC}^M(\mathbb{S}^m)-1$. For $m$ odd, we note here $\text{TC}^M(\mathbb{S}^m)=\text{TC}(\mathbb{S}^m)=2$  and thus $\text{TC}(X\vee \mathbb{S}^m)\leq \text{TC}(X)+1$. 

Assume now that $m$ is even. Let us show that then $\text{TC}^M(X\vee \mathbb{S}^m)\leq \text{TC}^M(X)+1$. We will use Dranishnikov's construction from \cite{dranishnikov2014topological}. As shown in (\cite{farber2003topological}, Theorem 8), a reserved motion planning algorithm on $\mathbb{S}^m$  with open cover is given by:
\begin{eqnarray*}
\widetilde{V}_0 &=& \{(A,B)\in \mathbb{S}^m\times \mathbb{S}^m:A\neq -B\}\\
\widetilde{V}_1 &=& \{(A,B)\in \mathbb{S}^m\times \mathbb{S}^m:A\neq B \text{ and } B\neq B_0\}\\
\widetilde{V}_2 &=& (\mathbb{S}^m-C)\times (\mathbb{S}^m-C)
\end{eqnarray*} for some $B_0\in \mathbb{S}^m$ and $C\in \mathbb{S}^m$ with $C$ distinct from $B_0$ and $-B_0$. We note that $\widetilde{V}_2\cap (C\times \mathbb{S}^m)=\emptyset$ and $\widetilde{V}_2\cap (\mathbb{S}^m\times C)=\emptyset$. Let $\text{TC}^M(X)=n+1$. Then there is a reserved motion planning algorithm on $X\times X$ with open cover $\widetilde{U}_0,\ldots,\widetilde{U}_n$. By \cite{dranishnikov2014topological} there is an open $(n+1)-$cover $\widetilde{U}_0,\ldots,\widetilde{U}_n,\ldots,\widetilde{U}_{n+2}$ of $X\times X$ by sets strictly deformable to the diagonal and there are strict deformations \[D^k_X:\widetilde{U}_k\times I\to X\times X\] of $\widetilde{U}_k$ to $\Delta X$ that preserve the faces $X\times x_0$ and $x_0\times X$. Similarly, there is an open $3-$cover $\widetilde{V}_0,\ldots,\widetilde{V}_2,\ldots,\widetilde{V}_{n+2}$ of $\mathbb{S}^m\times \mathbb{S}^m$ and there are strict deformations $D^k_Y$ of $\widetilde{V}_k$ in $\mathbb{S}^m\times \mathbb{S}^m$ to the diagonal $\Delta \mathbb{S}^m$ that preserve faces $C\times \mathbb{S}^m$ and $\mathbb{S}^m\times C$. Set \[U_k=\widetilde{U}_k\cap (X\times x_0), ~~  V_k=\widetilde{V}_k\cap (C\times \mathbb{S}^m),\text{ for } k=0,\ldots,n+2.\] Note that $U_0,\ldots,U_{n+2}$ is an $(n+1)-$cover of $X\times x_0=X$ and $V_0,\ldots,V_{n+2}$ is a $3-$cover of $C\times \mathbb{S}^m=\mathbb{S}^m$. Let $W_k=U_k\times V_k$. By \cite{dranishnikov2014topological}, $W_0,\ldots,W_{n+2}$ is an open cover of $X\times \mathbb{S}^m$. Note that $W_2=\emptyset$, because $\widetilde{V}_2\cap (C\times \mathbb{S}^m)=\emptyset$. We eliminate the empty set $W_2$ and gave a new enumeration so that $W_0,\ldots,W_{n+1}$ is an open cover of $X\times \mathbb{S}^m$ with all the sets $W_k$ nonempty. 

By Symmetry, we can obtain an open cover $W^\prime_0,\ldots,W^\prime_{n+1}$ of $\mathbb{S}^m \times X$ with all the sets $W^\prime_k$ nonempty. Set \[O_k=W_k\cup W^\prime_k\cup\widetilde{U}_k\cup \widetilde{V}_k,\text{ for } k=0,\ldots n+1, \] and note that $\{O_k\}$ covers $(X\vee \mathbb{S}^m)\times (X\vee \mathbb{S}^m)$. Note that the set \[C=X\vee \mathbb{S}^m\vee X\vee \mathbb{S}^m \] defines a partition of $(X\vee \mathbb{S}^m)\times (X\vee \mathbb{S}^m)$ into four pieces: $X\times X, X\times \mathbb{S}^m, \mathbb{S}^m\times X,$ and $\mathbb{S}^m\times \mathbb{S}^m$. By Dranishnikov's construction, there are deformations $T_k:W_k\times I\to X\times \mathbb{S}^m$ and $T^\prime_k:W^\prime_k\times I\to \mathbb{S}^m\times X$ such that the deformations $D^k_X, D^k_Y, T_k,$ and $T^\prime_k$ all agree on $O_k\cap C$. Then union of the deformations \[Q_k=D^k_X\cup D^k_Y\cup T_k\cup T^\prime_k:O_k\times I\to (X\vee \mathbb{S}^m)\times (X\vee \mathbb{S}^m)\] is well-defined and each $Q_k$ defines a reserved section $\alpha_k:O_k\to P(X\vee\mathbb{S}^m)$. Therefore $\text{TC}^M(X\vee \mathbb{S}^m)\leq n+1+1= \text{TC}^M(X)+1$. 
\end{proof}

\begin{example}\label{tc-wedge-liegroup} We have $\text{TC}(\mathbb{RP}^3\vee \mathbb{S}^5)=5$. More generally, if $G$ is a compact, connected Lie group such that $\text{cat}(G\times \mathbb{S}^m)=\text{cat}(G)+1$. Then \[\text{TC}(G\vee \mathbb{S}^m)=\text{TC}(G)+1,\] because $\text{cat}(G)=\text{TC}(G)=\text{TC}^M(G)$.
\end{example}

On the other hand, it is easy to verify the following statement.

\begin{lemma}\label{tc-product}
Suppose that $X$ and $Y$ are path connected finite CW complexes such that $\text{TC}(X)=zcl_{\mathbb{K}}(X)+1$ and $\text{TC}(Y)=zcl_{\mathbb{K}}(Y)+1$, where $\mathbb{K}$ is a field.  Then
\[\text{TC}(X\times Y)=\text{TC}(X)+\text{TC}(Y)-1.\] Furthermore, $\text{TC}(X\times Y)=zcl_{\mathbb{K}}(X\times Y)+1$.
\end{lemma}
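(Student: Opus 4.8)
The plan is to combine two standard inequalities: the general lower bound for topological complexity in terms of the zero-divisor cup-length (Proposition \ref{prop-1}(2)), and the general product upper bound $\text{TC}(X\times Y)\leq \text{TC}(X)+\text{TC}(Y)-1$. First I would establish the upper bound. This is the classical subadditivity property of $\text{TC}$ under products, proved exactly as $\text{cat}(X\times Y)\leq \text{cat}(X)+\text{cat}(Y)-1$ is proved: if $X\times X=\bigcup_{i=1}^{p}U_i$ with sections $s_i$ and $Y\times Y=\bigcup_{j=1}^{q}V_j$ with sections $t_j$, then the sets $W_r=\bigcup_{i+j=r}\{(u,v)\in U_i\times V_j\}$, after reindexing $(X\times Y)\times(X\times Y)\cong (X\times X)\times(Y\times Y)$, form an open cover of $(X\times Y)^{\times 2}$ by $p+q-1$ sets each admitting a section (combine $s_i$ and $t_j$ coordinatewise). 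Hence $\text{TC}(X\times Y)\leq \text{TC}(X)+\text{TC}(Y)-1$; using the hypotheses this is $\bigl(zcl_{\mathbb{K}}(X)+1\bigr)+\bigl(zcl_{\mathbb{K}}(Y)+1\bigr)-1$.

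Next I would establish the matching lower bound via cohomology. By Proposition \ref{prop-1}(2), $\text{TC}(X\times Y)\geq 1+zcl_{\mathbb{K}}(X\times Y)$, so it suffices to show $zcl_{\mathbb{K}}(X\times Y)\geq zcl_{\mathbb{K}}(X)+zcl_{\mathbb{K}}(Y)$. The idea is to take a nontrivial product $\bar a_1\cdots\bar a_p$ of $p=zcl_{\mathbb{K}}(X)$ zero-divisors in $H^\ast(X\times X;\mathbb{K})$ and a nontrivial product $\bar b_1\cdots\bar b_q$ of $q=zcl_{\mathbb{K}}(Y)$ zero-divisors in $H^\ast(Y\times Y;\mathbb{K})$, pull them back along the two projections $(X\times Y)^{\times 2}\to (X\times X)$ and $(X\times Y)^{\times 2}\to (Y\times Y)$ (suitably permuting factors), observe the pullbacks are again zero-divisors for $X\times Y$ (a class that restricts to $0$ on the diagonal pulls back to such a class, since the diagonal of $X\times Y$ maps into the diagonals of $X$ and of $Y$), and show the product of all $p+q$ pulled-back classes is nonzero. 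The nonvanishing follows from the Künneth theorem: since all cohomology groups involved are finite-dimensional $\mathbb{K}$-vector spaces (finite CW complexes), $H^\ast((X\times Y)^{\times 2};\mathbb{K})\cong H^\ast(X\times X;\mathbb{K})\otimes H^\ast(Y\times Y;\mathbb{K})$ as algebras (up to sign), and under this isomorphism the product in question corresponds to $\pm(\bar a_1\cdots\bar a_p)\otimes(\bar b_1\cdots\bar b_q)\neq 0$. Hence $zcl_{\mathbb{K}}(X\times Y)\geq p+q$, giving $\text{TC}(X\times Y)\geq 1+zcl_{\mathbb{K}}(X)+zcl_{\mathbb{K}}(Y)=\text{TC}(X)+\text{TC}(Y)-1$.

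Combining the two bounds yields $\text{TC}(X\times Y)=\text{TC}(X)+\text{TC}(Y)-1$, and along the way we showed $zcl_{\mathbb{K}}(X\times Y)\geq zcl_{\mathbb{K}}(X)+zcl_{\mathbb{K}}(Y)$; but then
\[
\text{TC}(X\times Y)-1=zcl_{\mathbb{K}}(X)+zcl_{\mathbb{K}}(Y)\leq zcl_{\mathbb{K}}(X\times Y)\leq \text{TC}(X\times Y)-1,
\]
so equality holds throughout and $\text{TC}(X\times Y)=zcl_{\mathbb{K}}(X\times Y)+1$, which is the second assertion.

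I expect the main obstacle to be the careful bookkeeping in the lower bound: one must be precise about which projection each tensor factor is pulled back along (the homeomorphism $(X\times Y)\times(X\times Y)\cong (X\times X)\times(Y\times Y)$ requires swapping the two middle coordinates, introducing Koszul signs), verify that the pulled-back classes genuinely lie in the zero-divisor ideal of $H^\ast((X\times Y)^{\times 2};\mathbb{K})$, and confirm that the Künneth isomorphism is multiplicative so that a nonzero tensor of nonzero products stays nonzero. The finite-CW (hence finite-type) hypothesis is exactly what makes Künneth available over the field $\mathbb{K}$; everything else is formal manipulation of the cup-product structure.
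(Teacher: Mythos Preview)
Your argument is correct and is exactly the standard one the paper has in mind; the paper itself gives no proof of this lemma, stating only that it ``is easy to verify.'' One minor remark on your sketch of the product inequality $\text{TC}(X\times Y)\leq \text{TC}(X)+\text{TC}(Y)-1$: as written, the sets $W_r=\bigcup_{i+j=r}U_i\times V_j$ need not admit a single continuous section, since the pieces $U_i\times V_j$ with $i+j=r$ can overlap with incompatible sections; the actual proof (Farber, \cite{farber2003topological}, Theorem~11) first passes to a refinement so that each point has a well-defined minimal index, but since you are explicitly invoking this as a classical result this is not a gap in your reasoning.
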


 The following theorem is the other principal result of this paper.
\begin{theorem}\label{theorem=tc}
 Let $\mathbb{K}$ be a field and $G$ an $m-$dimensional connected, compact Lie group ($m\geq 1$) and $n\geq 1$, such that $\text{cat}(G\times \mathbb{S}^{m+n-1})=\text{cat}(G)+1$(for example, if $G$ satisfies Ganea's conjecture) and $\text{TC}(G)=zcl_{\mathbb{K}}(G)+1$. Then
\begin{equation*}
\text{TC}(F(G\times\mathbb{R}^n,2))= 2\text{TC}(G)=2\text{cat}(G). 
\end{equation*}
Furthermore  $\text{TC}(F(G\times\mathbb{R}^n,2)) = zcl_{\mathbb{K}}(F(G\times\mathbb{R}^n,2))+1$.
\end{theorem}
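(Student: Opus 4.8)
The plan is to transport the problem, via Proposition \ref{homotopy-type} and the homotopy invariance of $\text{TC}$ and of $zcl_{\mathbb{K}}$, to the product $G\times Y$ with $Y:=G\vee\mathbb{S}^{m+n-1}$, and then to apply the product formula of Lemma \ref{tc-product}. First, Proposition \ref{homotopy-type} gives $\text{TC}(F(G\times\mathbb{R}^n,2))=\text{TC}(G\times Y)$ and $zcl_{\mathbb{K}}(F(G\times\mathbb{R}^n,2))=zcl_{\mathbb{K}}(G\times Y)$. Next, since $G$ is a connected compact Lie group one has $\text{cat}(G)=\text{TC}(G)=\text{TC}^M(G)$, and the hypothesis $\text{cat}(G\times\mathbb{S}^{m+n-1})=\text{cat}(G)+1$ lets us invoke Example \ref{tc-wedge-liegroup} (an instance of Proposition \ref{tc-wedge}) to conclude $\text{TC}(Y)=\text{TC}(G)+1$.

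The crux is to show that $Y$ again satisfies $\text{TC}(Y)=zcl_{\mathbb{K}}(Y)+1$. Since $zcl_{\mathbb{K}}(Y)+1\le\text{TC}(Y)$ by Proposition \ref{prop-1}(2), it suffices to exhibit a nonzero product of $\text{TC}(G)-1$ zero-divisors in $H^{*}(Y\times Y;\mathbb{K})$. I would first record the standard fact that for a compact connected Lie group $cup_{\mathbb{K}}(G)=zcl_{\mathbb{K}}(G)$: the Hopf algebra $H^{*}(G;\mathbb{K})$ is a finite tensor product of exterior algebras $\Lambda(a)$ and truncated polynomial algebras $\mathbb{K}[b]/(b^{p^{r}})$; for each such factor $cup$ and $zcl$ coincide (for $\mathbb{K}[b]/(b^{p^{r}})$ because its zero-divisor ideal is generated by $\bar{b}=b\otimes 1-1\otimes b$, with $\bar{b}^{\,p^{r}}=0$ but $\bar{b}^{\,p^{r}-1}\ne 0$), and both invariants are additive over tensor products. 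Hence, writing $c:=zcl_{\mathbb{K}}(G)=cup_{\mathbb{K}}(G)=\text{TC}(G)-1$, there exist $y_{1},\dots,y_{c}\in\widetilde{H}^{*}(G;\mathbb{K})$ with $y_{1}\cdots y_{c}\ne 0$.

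Let $q\colon Y\to G$ collapse the sphere, $r\colon Y\to\mathbb{S}^{m+n-1}$ collapse $G$, and let $u$ generate $H^{m+n-1}(\mathbb{S}^{m+n-1};\mathbb{K})$. One has the decomposition $H^{*}(Y)=q^{*}H^{*}(G)\oplus\mathbb{K}\langle r^{*}u\rangle$ with $(r^{*}u)^{2}=0$ and $(r^{*}u)\cdot q^{*}\widetilde{H}^{*}(G)=0$ (the vanishing is checked by restricting to the two wedge summands). Now consider, in $H^{*}(Y)\otimes H^{*}(Y)=H^{*}(Y\times Y)$, the $c+1$ zero-divisors $q^{*}y_{i}\otimes 1-1\otimes q^{*}y_{i}$ for $i=1,\dots,c$ together with $r^{*}u\otimes 1-1\otimes r^{*}u$. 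Expanding their product and using the multiplicative structure above, every term in which $r^{*}u$ is multiplied by a positive-degree class vanishes, and what remains is $\pm\,q^{*}(y_{1}\cdots y_{c})\otimes r^{*}u\ \mp\ r^{*}u\otimes q^{*}(y_{1}\cdots y_{c})$; this is nonzero, because $q^{*}$ is injective (it is split by the inclusion $G\hookrightarrow Y$), so $q^{*}(y_{1}\cdots y_{c})\ne 0$, and the two summands lie in the complementary Künneth components $q^{*}\widetilde{H}^{*}(G)\otimes\mathbb{K}\langle r^{*}u\rangle$ and $\mathbb{K}\langle r^{*}u\rangle\otimes q^{*}\widetilde{H}^{*}(G)$. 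Thus $zcl_{\mathbb{K}}(Y)\ge c+1=\text{TC}(Y)-1$, and therefore $\text{TC}(Y)=zcl_{\mathbb{K}}(Y)+1$.

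Finally, $G$ and $Y$ are path-connected finite CW complexes with $\text{TC}(G)=zcl_{\mathbb{K}}(G)+1$ (hypothesis) and $\text{TC}(Y)=zcl_{\mathbb{K}}(Y)+1$ (previous step), so Lemma \ref{tc-product} yields
\[
\text{TC}(G\times Y)=\text{TC}(G)+\text{TC}(Y)-1=\text{TC}(G)+\big(\text{TC}(G)+1\big)-1=2\,\text{TC}(G),
\]
together with $\text{TC}(G\times Y)=zcl_{\mathbb{K}}(G\times Y)+1$. Combining with the first paragraph and with $\text{TC}(G)=\text{cat}(G)$ finishes the proof. The genuinely delicate point is the middle step: Lemma \ref{prop-cup-length}(2) only gives $zcl_{\mathbb{K}}(Y)\ge\max\{zcl_{\mathbb{K}}(G),zcl_{\mathbb{K}}(\mathbb{S}^{m+n-1})\}$, one short of what is needed, so one must use that the top product in $H^{*}(G)$ survives (i.e. $cup_{\mathbb{K}}(G)=zcl_{\mathbb{K}}(G)$, a feature of Lie groups) and carry out the explicit ring computation in $H^{*}(Y\times Y)$ showing that the extra sphere zero-divisor does not annihilate the product inherited from $G$; everything else is formal bookkeeping with the lemmas of the excerpt.
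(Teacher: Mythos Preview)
Your argument is correct and follows the same skeleton as the paper: reduce via Proposition~\ref{homotopy-type} to $G\times(G\vee\mathbb{S}^{m+n-1})$, obtain $\text{TC}(G\vee\mathbb{S}^{m+n-1})=\text{TC}(G)+1$ from Example~\ref{tc-wedge-liegroup}, and finish with Lemma~\ref{tc-product}. The one substantive difference is the verification that $\text{TC}(G\vee\mathbb{S}^{m+n-1})=zcl_{\mathbb{K}}(G\vee\mathbb{S}^{m+n-1})+1$: the paper simply cites Lemma~\ref{prop-cup-length}, whereas you correctly observe that part~(2) of that lemma only yields $zcl_{\mathbb{K}}(G\vee\mathbb{S}^{m+n-1})\ge zcl_{\mathbb{K}}(G)$, one short of what is needed, and you close the gap with the Hopf-algebra argument giving $cup_{\mathbb{K}}(G)=zcl_{\mathbb{K}}(G)$ together with the explicit computation showing $\big(\prod_i\overline{q^{*}y_i}\big)\cdot\overline{r^{*}u}\ne 0$ in $H^{*}(Y\times Y;\mathbb{K})$. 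That extra step is sound (the key points being that the zero-divisor ideal of $H^{*}(G)$ is generated by the $\bar{x}_i$ for algebra generators $x_i$, and that each $\bar{x}_i$ has nilpotency equal to that of $x_i$, so a pigeonhole gives $zcl_{\mathbb{K}}(G)\le cup_{\mathbb{K}}(G)$), and it makes your write-up more complete than the paper's at precisely this point.
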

\begin{proof}
Since $\text{TC}(X)$ is a homotopy invariant of $X$, it follows from Proposition \ref{homotopy-type} that $\text{TC}(F(G\times\mathbb{R}^n,2))=\text{TC}(G\times (G\vee \mathbb{S}^{m+n-1}))$. Next, $\text{TC}(G\vee \mathbb{S}^{m+n-1})=\text{TC}(G)+1$ by Example \ref{tc-wedge-liegroup}. By Lemma \ref{prop-cup-length}, $\text{TC}(G\vee \mathbb{S}^{m+n-1})=zcl_{\mathbb{K}}(G\vee \mathbb{S}^{m+n-1})+1$ and so, by Lemma \ref{tc-product}, $\text{TC}(F(G\times\mathbb{R}^n,2))= 2\text{TC}(G)$.
\end{proof}

\section{\bf Applications}

\subsection{Planar motion}

The space $SO(2)\times \mathbb{R}^2$, which is homeomorphic to $\mathbb{S}^1\times\mathbb{R}^2$, describes all planar motions of a rigid body in the $2-$dimensional space $\mathbb{R}^2$. Thus, we are interested in the configuration space of $k$ distinct points in the product $\mathbb{S}^1\times\mathbb{R}^2$ which describes all planar motions of $k$ robots in $\mathbb{R}^2$ and allows the bodies to occupy the same point in the plane (as long as their orientations are different).

The following Lemma generalises Lemma 10.2 given by Michael Farber in \cite{farber2004instabilities}.
\begin{lemma}\label{tc-wedge-spheres}
Let $Z=\underbrace{\mathbb{S}^{m_1}\vee\cdots\vee\mathbb{S}^{m_n}}_{n \text{ factors }}$ denote the wedge of spheres $\mathbb{S}^{m_i}$. Then,
\begin{equation*}
\text{TC}(Z)=\left\{
  \begin{array}{ll}
    2, & \hbox{if $n=1$ and $m_1$ is odd;} \\
    3, & \hbox{if either $n>1$ or some $m_i$ is even.}
  \end{array}
\right.
\end{equation*}
\end{lemma}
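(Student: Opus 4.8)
The plan is to split into the two cases described in the statement and handle the easy direction by exhibiting explicit inequalities, then pin down the hard direction with a cohomology computation. First I would recall the general bounds: for any path-connected CW complex $Z$ we have $\mathrm{TC}(Z)\le 2\,\mathrm{cat}(Z)-1$ and also $\mathrm{TC}(Z)\le \dim(Z\times Z)+1$, while $1+zcl_{\mathbb{K}}(Z)\le\mathrm{TC}(Z)$ by Proposition \ref{prop-1}. Since $Z$ is a wedge of spheres, Example \ref{cat-wedges-spheres} gives $\mathrm{cat}(Z)=2$ (when $Z$ is not a point), hence immediately $\mathrm{TC}(Z)\le 3$ in all nontrivial cases. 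This takes care of the upper bound; the work is entirely in the lower bounds and in the exceptional case $n=1$ with $m_1$ odd.

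For the case $n=1$ and $m_1$ odd, $Z=\mathbb{S}^{m_1}$ and this is the classical computation of Farber: $\mathrm{TC}(\mathbb{S}^m)=2$ for $m$ odd, $3$ for $m$ even. I would simply cite \cite{farber2003topological} (Theorem 8) here rather than reprove it, or note that for $m$ odd the Euler class argument shows $H^*(\mathbb{S}^m)$ has a zero-divisor $u=1\otimes a - a\otimes 1$ with $u^2=0$ but $u\ne 0$, giving $zcl=1$ hence $\mathrm{TC}\ge 2$, matched by the upper bound $2\,\mathrm{cat}-1=3$ improved to $2$ via the standard section (or the fact that $\mathbb{S}^m$ for $m$ odd admits a nonvanishing vector field, so $\mathrm{TC}(\mathbb{S}^m)\le 2$). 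The main point of the lemma over Farber's Lemma 10.2 is the general wedge $Z=\mathbb{S}^{m_1}\vee\cdots\vee\mathbb{S}^{m_n}$, so I would focus there.

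For the case $n>1$ or some $m_i$ even, I need $\mathrm{TC}(Z)\ge 3$, i.e. $zcl_{\mathbb{K}}(Z)\ge 2$ for a suitable field $\mathbb{K}$. The key computation is that $\widetilde{H}^*(Z;\mathbb{K})$ has a basis of classes $a_1,\dots,a_n$ in degrees $m_1,\dots,m_n$ with all cup products of positive-degree classes vanishing (a wedge of spheres has trivial reduced cup product structure), so $H^*(Z;\mathbb{K})\cong \mathbb{K}[a_1,\dots,a_n]/(a_ia_j)$. In $H^*(Z)\otimes H^*(Z)$ the classes $\bar a_i := 1\otimes a_i - a_i\otimes 1$ are zero-divisors. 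If some $m_i$ is even, pick $\mathbb{K}=\mathbb{Q}$ (or any field): then $\bar a_i^2 = -2\, a_i\otimes a_i \ne 0$ when $m_i$ is even — wait, one must check the sign and that $a_i\otimes a_i\ne 0$, which holds since $a_i\ne0$; for $m_i$ even, $\bar a_i^2 = -(1+(-1)^{m_i})a_i\otimes a_i \cdot(\pm1)$, giving a nonzero class, so $zcl\ge 2$. If instead $n>1$, take two distinct indices $i\ne j$ and consider $\bar a_i\cdot\bar a_j = a_ia_j\otimes 1 - a_i\otimes a_j \pm a_j\otimes a_i + 1\otimes a_ia_j = -a_i\otimes a_j \pm a_j\otimes a_i$ (using $a_ia_j=0$), which is nonzero in $H^{m_i}(Z)\otimes H^{m_j}(Z) \oplus H^{m_j}(Z)\otimes H^{m_i}(Z)$ since the two summands are linearly independent (even if $m_i=m_j$, the two terms $a_i\otimes a_j$ and $a_j\otimes a_i$ are distinct basis elements as $a_i\ne a_j$). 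Hence $zcl_{\mathbb{K}}(Z)\ge 2$, so $\mathrm{TC}(Z)\ge 3$, and combined with $\mathrm{TC}(Z)\le 3$ we get equality.

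The main obstacle is bookkeeping the Koszul signs in the tensor-algebra multiplication and making sure the relevant products genuinely land in linearly independent pieces of $H^*(Z)\otimes H^*(Z)$ — in particular the subcase $n>1$ with $m_i=m_j$ both odd, where $\bar a_i^2=0$ individually but the mixed product $\bar a_i\bar a_j$ must be shown nonzero. This is routine linear algebra once the ring structure $H^*(Z)=\mathbb{K}\oplus(\text{degree-}m_i\text{ line})_{i=1}^n$ with trivial products is in hand, but it is the only place where care is needed. Everything else follows from Proposition \ref{prop-1}, Example \ref{cat-wedges-spheres}, and the dimension bound $\mathrm{TC}(Z)\le\dim(Z)+1$ which already forces $\mathrm{TC}(Z)=2$ in the $n=1$, $m_1$ odd case once we know $\mathrm{TC}(\mathbb{S}^{m_1})\ne 1$ (a noncontractible space has $\mathrm{TC}\ge 2$).
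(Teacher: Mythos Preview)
Your proof is correct but takes a different route from the paper for the lower bound in the case $n>1$ or some $m_i$ even. The paper does not compute any zero-divisors directly; instead it invokes Dranishnikov's inequality $\mathrm{TC}(X\vee Y)\geq \max\{\mathrm{TC}(X),\mathrm{TC}(Y),\mathrm{cat}(X\times Y)\}$ (\cite{dranishnikov2014topological}, Theorem 3.6): if some $m_i$ is even this gives $\mathrm{TC}(Z)\geq \mathrm{TC}(\mathbb{S}^{m_i})=3$, and if $n\geq 2$ it gives $\mathrm{TC}(Z)\geq \mathrm{cat}(\mathbb{S}^{m_1}\times\mathbb{S}^{m_2})=3$. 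Your cohomological argument is more elementary and self-contained (it does not need Dranishnikov's result), and it has the bonus of showing $zcl_{\mathbb{K}}(Z)=2$ explicitly, which is what is actually used later in Proposition~\ref{planar}. The paper's argument, on the other hand, avoids any sign bookkeeping and generalises more readily to wedges of spaces that are not spheres. Both share the same upper bound $\mathrm{TC}(Z)\leq 2\,\mathrm{cat}(Z)-1=3$ and the same citation of Farber for the single odd sphere.

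Two small slips to clean up. First, when you write ``pick $\mathbb{K}=\mathbb{Q}$ (or any field)'' in the even-$m_i$ case, note that $\bar a_i^2=-2\,a_i\otimes a_i$ vanishes in characteristic $2$, so you should say ``any field of characteristic $\neq 2$''. Second, your closing remark that ``the dimension bound $\mathrm{TC}(Z)\leq\dim(Z)+1$ already forces $\mathrm{TC}(Z)=2$ in the $n=1$, $m_1$ odd case'' is incorrect: $\dim(\mathbb{S}^{m_1})+1=m_1+1$, not $2$. This does not damage your proof, since you had already handled that case correctly by citing \cite{farber2003topological}, but the sentence should be deleted.
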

\begin{proof}
The case $n=1$ and $m_1$ is odd, is well known (see \cite{farber2003topological}, Theorem 8). On the other hand, \begin{eqnarray}
\text{TC}(Z) &\leq& 2\text{cat}(Z)-1 \label{ss}\\
      &=& 2\cdot 2-1\label{tt}\\
      &=& 3\nonumber, 
\end{eqnarray} where (\ref{ss}) follows from (\cite{farber2003topological}, Theorem 5) and (\ref{tt}) follows from Example \ref{cat-wedges-spheres}.

Case 1: some $m_i$ is even. We have \begin{eqnarray}
3 &=& \text{TC}(\mathbb{S}^{m_i}) \label{tcmi} \\
  &\leq& \text{TC}(Z) \label{tcs-tsz} \\ &\leq& 3\nonumber,
\end{eqnarray} where (\ref{tcmi}) follows from (\cite{farber2003topological}, Theorem 8) and (\ref{tcs-tsz}) follows from (\cite{dranishnikov2014topological}, Theorem 3.6). Thus $\text{TC}(Z)=3$.

Case 2: $n\geq 2$. We have,
\begin{eqnarray}
3 &=& \text{cat}(\mathbb{S}^{m_1}\times \mathbb{S}^{m_2})\nonumber\\
  &\leq& \text{TC}(Z)\label{des} \\
  &\leq& 3\nonumber,
\end{eqnarray} where (\ref{des}) follows from (\cite{dranishnikov2014topological}, Theorem 3.6). Thus $\text{TC}(Z)=3$.
\end{proof}

\begin{proposition}\label{planar}
For $n\geq 1$, 
\begin{eqnarray*}
 \text{TC}(F(\mathbb{S}^1\times \mathbb{R}^n,2)) &=& 4.
\end{eqnarray*} 
\end{proposition}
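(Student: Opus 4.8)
The plan is to reduce the statement to the machinery already developed in the paper, specialized to $G = \mathbb{S}^1$, and then to fill the one gap that the general theorem cannot cover. By Proposition~\ref{homotopy-type} with $G = \mathbb{S}^1$ (so $m = 1$), the configuration space $F(\mathbb{S}^1 \times \mathbb{R}^n, 2)$ has the homotopy type of $\mathbb{S}^1 \times (\mathbb{S}^1 \vee \mathbb{S}^{n})$, and since $\mathrm{TC}$ is a homotopy invariant it suffices to compute $\mathrm{TC}(\mathbb{S}^1 \times (\mathbb{S}^1 \vee \mathbb{S}^{n}))$. The first step is therefore to identify $\mathrm{TC}(\mathbb{S}^1 \vee \mathbb{S}^{n})$: by Lemma~\ref{tc-wedge-spheres}, since the wedge has $n = 2$ factors (in the notation of that lemma), we get $\mathrm{TC}(\mathbb{S}^1 \vee \mathbb{S}^{n}) = 3$, independently of the parity of $n$.

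Next I would like to apply Lemma~\ref{tc-product} with $X = \mathbb{S}^1$ and $Y = \mathbb{S}^1 \vee \mathbb{S}^{n}$. For $X = \mathbb{S}^1$ we have $\mathrm{TC}(\mathbb{S}^1) = 2 = zcl_{\mathbb{K}}(\mathbb{S}^1) + 1$ (taking, say, $\mathbb{K} = \mathbb{Q}$ or any field: $H^*(\mathbb{S}^1)$ has a single one-dimensional zero-divisor whose square vanishes, giving $zcl = 1$). The remaining hypothesis of Lemma~\ref{tc-product} is that $\mathrm{TC}(\mathbb{S}^1 \vee \mathbb{S}^{n}) = zcl_{\mathbb{K}}(\mathbb{S}^1 \vee \mathbb{S}^{n}) + 1$, i.e.\ that the zero-divisor cup-length of the wedge equals $2$. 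One direction is the general lower bound $1 + zcl_{\mathbb{K}}(Z) \le \mathrm{TC}(Z) = 3$; for the reverse I would exhibit an explicit nontrivial product of two zero-divisors in $H^*(\mathbb{S}^1 \vee \mathbb{S}^{n}; \mathbb{K})$, using that $H^*(\mathbb{S}^1 \vee \mathbb{S}^{n})$ has generators $u$ in degree $1$ and $w$ in degree $n$ with $uw = 0$, $u^2 = 0$, $w^2 = 0$; the class $\bar u = 1 \otimes u - u \otimes 1$ times $\bar w = 1 \otimes w - w \otimes 1$ in $H^*(Z) \otimes H^*(Z)$ should survive (the cross-term $\pm\, u \otimes w$, say, is nonzero and not cancelled), certifying $zcl_{\mathbb{K}}(Z) = 2$. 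Once that is in hand, Lemma~\ref{tc-product} gives
\[
\mathrm{TC}(\mathbb{S}^1 \times (\mathbb{S}^1 \vee \mathbb{S}^{n})) = \mathrm{TC}(\mathbb{S}^1) + \mathrm{TC}(\mathbb{S}^1 \vee \mathbb{S}^{n}) - 1 = 2 + 3 - 1 = 4,
\]
and combining with the homotopy equivalence above yields $\mathrm{TC}(F(\mathbb{S}^1 \times \mathbb{R}^n, 2)) = 4$.

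I expect the only real obstacle to be the verification of $zcl_{\mathbb{K}}(\mathbb{S}^1 \vee \mathbb{S}^{n}) = 2$, which is why Theorem~\ref{theorem=tc} itself does not directly apply here: that theorem requires $\mathrm{TC}(G) = zcl_{\mathbb{K}}(G) + 1$ and a Ganea-type condition, and while $G = \mathbb{S}^1$ does satisfy $\mathrm{TC}(\mathbb{S}^1) = zcl_{\mathbb{K}}(\mathbb{S}^1) + 1$, the wedge $\mathbb{S}^1 \vee \mathbb{S}^n$ falls under the even-$n$ case of Lemma~\ref{tc-wedge-spheres} where $\mathrm{TC} = 3$ while the argument of Theorem~\ref{theorem=tc} routed through Example~\ref{tc-wedge-liegroup} would predict $\mathrm{TC}(G) + 1 = 3$ — consistent, but the chain of equalities with $zcl$ still needs the explicit cohomology computation above. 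The bookkeeping with the Koszul signs in $H^*(Z) \otimes H^*(Z)$ is the place where care is needed, but it is a finite check in a two-generator exterior-type algebra and presents no genuine difficulty.
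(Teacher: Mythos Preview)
Your proof is correct and follows essentially the same route as the paper's first proof: reduce via Proposition~\ref{homotopy-type} to $\mathbb{S}^1\times(\mathbb{S}^1\vee\mathbb{S}^n)$, use Lemma~\ref{tc-wedge-spheres} together with the identification $\mathrm{TC}(\mathbb{S}^1\vee\mathbb{S}^n)=zcl_{\mathbb{Q}}(\mathbb{S}^1\vee\mathbb{S}^n)+1=3$, and then apply Lemma~\ref{tc-product}. Your explicit verification that $\bar u\cdot\bar w\neq 0$ is exactly the content behind the paper's one-line assertion that $zcl_{\mathbb{Q}}=2$.

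One correction: you say Theorem~\ref{theorem=tc} ``does not directly apply here,'' but in fact it does, and the paper invokes it as a second, alternative proof. For $G=\mathbb{S}^1$ both hypotheses hold: $\mathrm{TC}(\mathbb{S}^1)=2=zcl_{\mathbb{K}}(\mathbb{S}^1)+1$, and $\mathrm{cat}(\mathbb{S}^1\times\mathbb{S}^n)=3=\mathrm{cat}(\mathbb{S}^1)+1$ (cup-length $2$ gives the lower bound, the product inequality the upper). The theorem then yields $\mathrm{TC}(F(\mathbb{S}^1\times\mathbb{R}^n,2))=2\,\mathrm{TC}(\mathbb{S}^1)=4$ immediately. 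Your observation that the $zcl$ equality for the wedge ultimately underlies even that route is fair, but it is already packaged into the proof of Theorem~\ref{theorem=tc} via Lemma~\ref{prop-cup-length} and Example~\ref{tc-wedge-liegroup}, so no extra work is needed to cite it.
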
 
\begin{proof} We give two proofs of the proposition. For the first, note that from Lemma \ref{tc-wedge-spheres},  $\text{TC}(\mathbb{S}^{m_1}\vee\mathbb{S}^{m_2})=zcl_{\mathbb{Q}}(\mathbb{S}^{m_1}\vee\mathbb{S}^{m_2})+1=3$. For the second use Theorem \ref{theorem=tc}.
 \end{proof}

 \begin{remark} There are two reasons for studying these particular configuration spaces.
 \begin{itemize}
     \item First, the ordered configuration space $F(\mathbb{S}^1\times \mathbb{R},2)$ describes the simultaneous movement  of two particles, without collisions, on the cylinder $\mathbb{S}^1\times \mathbb{R}$ (see left-hand side of  Figure \ref{figu}). The ordered configuration space $F(\mathbb{S}^1\times \mathbb{D}^2,2)$, which is homotopy equivalent to $F(\mathbb{S}^1\times \mathbb{R}^2,2)$, describes the simultaneous movement of two particles, without collisions, on the solid torus $\mathbb{S}^1\times \mathbb{D}^2$ (see right-hand side of Figure \ref{figu}).
 \begin{figure}[htb]
 \centering
 \caption{$F(\mathbb{S}^1\times \mathbb{R},2)$ and $F(\mathbb{S}^1\times \mathbb{D}^2,2)$}
 \label{figu}
 \includegraphics[scale=0.6]{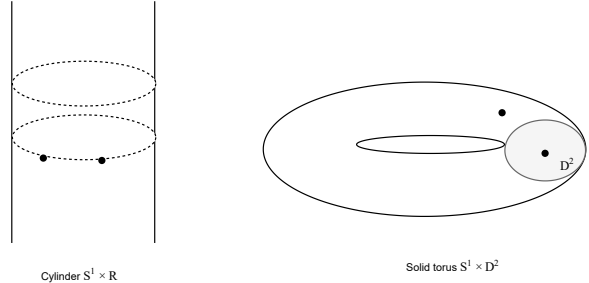}
\end{figure} 

\item Second, the space $\mathbb{S}^1\times \mathbb{R}^3$ describes movements of a rigid body, with a fixed point but under the influence of gravity, in the $3-$dimensional space $\mathbb{R}^3$. In this situation, the circle $\mathbb{S}^1$ consists of rotations about the direction of gravity. 
 \end{itemize}
 \end{remark}

\begin{remark}
 We can compare Proposition \ref{planar} with the topological complexity of the Cartesian product $(\mathbb{S}^1\times \mathbb{R}^n)^{2}$, which is homotopy equivalent to the product $\mathbb{S}^1\times\mathbb{S}^1$. By Lemma \ref{tc-product} (or \cite{farber2003topological}, Theorem 13) we easily obtain \begin{equation*}
 \text{TC}((\mathbb{S}^1\times \mathbb{R}^n)^{2}) = 3.
 \end{equation*} Thus, on $\mathbb{S}^1\times \mathbb{R}^n$, the complexity of the collision-free motion planning problem for two robots is more complicated than the complexity of the similar problem when the robots are allowed to collide.  
\end{remark}
 
\subsection{Spatial motion}

The space $SO(3)\times \mathbb{R}^3$, which is homeomorphic to $\mathbb{RP}^3\times\mathbb{R}^3$, describes all spatial motions of a rigid body in the $3-$dimensional space $\mathbb{R}^3$. Thus, we are interested in the configuration space of $k$ distinct points in the product $\mathbb{RP}^3\times\mathbb{R}^3$ which describes all spatial motions of $k$ robots in $\mathbb{R}^3$ and allows the bodies to occupy the same point in space (as long as their orientations are different).

\begin{proposition}\label{spatial}
For $n\geq 1$,
\begin{eqnarray*}
 \text{TC}(F(\mathbb{RP}^3\times\mathbb{R}^n,2)) &=& 8.
\end{eqnarray*}  
\end{proposition}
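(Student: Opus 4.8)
The plan is to apply Theorem \ref{theorem=tc} directly with $G = SO(3) \cong \mathbb{RP}^3$, $m = 3$, and the appropriate $n$. First I would verify the hypotheses of that theorem. We have $m = \dim(SO(3)) = 3 \geq 1$, and we must check that $\mathrm{cat}(SO(3) \times \mathbb{S}^{m+n-1}) = \mathrm{cat}(SO(3)) + 1$ and that $\mathrm{TC}(SO(3)) = zcl_{\mathbb{K}}(SO(3)) + 1$ for a suitable field $\mathbb{K}$. The natural choice is $\mathbb{K} = \mathbb{Z}_2$, since $H^*(\mathbb{RP}^3; \mathbb{Z}_2) = \mathbb{Z}_2[x]/(x^4)$ with $\deg(x) = 1$. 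This gives $cup_{\mathbb{Z}_2}(\mathbb{RP}^3) = 3$, hence $\mathrm{cat}(\mathbb{RP}^3) = 4$ (using $1 + cup \leq \mathrm{cat} \leq \dim + 1 = 4$). Since $SO(3)$ is a connected Lie group, $\mathrm{TC}(SO(3)) = \mathrm{cat}(SO(3)) = 4$ by the Remark after the definition of TC.

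Next I would confirm the zero-divisor condition $\mathrm{TC}(\mathbb{RP}^3) = zcl_{\mathbb{Z}_2}(\mathbb{RP}^3) + 1 = 4$, i.e.\ $zcl_{\mathbb{Z}_2}(\mathbb{RP}^3) = 3$. With $u = x \otimes 1 + 1 \otimes x \in H^1(\mathbb{RP}^3 \times \mathbb{RP}^3; \mathbb{Z}_2)$ being the canonical zero-divisor, one computes $u^3 \neq 0$ (the standard computation: $u^3 = x^3 \otimes 1 + x^2 \otimes x + x \otimes x^2 + 1 \otimes x^3$, nonzero by the freeness of the tensor product over $\mathbb{Z}_2$), so $zcl_{\mathbb{Z}_2} \geq 3$, and the upper bound $\mathrm{TC} \leq 4$ forces equality. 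For the Ganea-type hypothesis $\mathrm{cat}(\mathbb{RP}^3 \times \mathbb{S}^{m+n-1}) = \mathrm{cat}(\mathbb{RP}^3) + 1 = 5$: the lower bound follows from Lemma \ref{prop-cup-length}(1), since $cup_{\mathbb{Z}_2}(\mathbb{RP}^3 \times \mathbb{S}^{m+n-1}) = cup_{\mathbb{Z}_2}(\mathbb{RP}^3) + cup_{\mathbb{Z}_2}(\mathbb{S}^{m+n-1}) = 3 + 1 = 4$ (when $m+n-1 \geq 1$, which always holds here), and the upper bound follows from the product inequality $\mathrm{cat}(X \times Y) \leq \mathrm{cat}(X) + \mathrm{cat}(Y) - 1 = 4 + 2 - 1 = 5$. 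So all hypotheses of Theorem \ref{theorem=tc} hold.

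Applying Theorem \ref{theorem=tc} then yields $\mathrm{TC}(F(\mathbb{RP}^3 \times \mathbb{R}^n, 2)) = 2\,\mathrm{TC}(SO(3)) = 2 \cdot 4 = 8$ for all $n \geq 1$, as claimed. Since $SO(3) \times \mathbb{R}^3 \cong \mathbb{RP}^3 \times \mathbb{R}^3$ is the relevant case for spatial motion, this is exactly the statement. I would also note that the $zcl$ equality $\mathrm{TC}(F(\mathbb{RP}^3 \times \mathbb{R}^n, 2)) = zcl_{\mathbb{Z}_2}(F(\mathbb{RP}^3 \times \mathbb{R}^n, 2)) + 1$ comes for free from the "furthermore" clause of Theorem \ref{theorem=tc}.

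The proof is essentially a verification that $\mathbb{RP}^3$ satisfies the (by now fairly mild) hypotheses, so there is no real obstacle: the only point needing a little care is the Ganea-type condition $\mathrm{cat}(\mathbb{RP}^3 \times \mathbb{S}^{m+n-1}) = \mathrm{cat}(\mathbb{RP}^3)+1$, but as observed above this follows immediately from sandwiching between the $cup_{\mathbb{Z}_2}$ lower bound and the product upper bound for $\mathrm{cat}$ — the cohomological lower bound is large enough precisely because $cup_{\mathbb{Z}_2}(\mathbb{RP}^3) = 3$ is maximal relative to the dimension. Everything else is a direct citation of Theorem \ref{theorem=tc} together with the well-known facts $\mathrm{cat}(\mathbb{RP}^3) = \mathrm{TC}(\mathbb{RP}^3) = 4$.
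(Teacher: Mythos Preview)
Your proof is correct and follows exactly the paper's approach: the paper simply states that the proposition follows immediately from Theorem~\ref{theorem=tc}, and you have carefully spelled out the verification of its hypotheses for $G=SO(3)\cong\mathbb{RP}^3$ with $\mathbb{K}=\mathbb{Z}_2$. Your checks of $\mathrm{cat}(\mathbb{RP}^3)=\mathrm{TC}(\mathbb{RP}^3)=zcl_{\mathbb{Z}_2}(\mathbb{RP}^3)+1=4$ and of the Ganea-type condition via the cup-length sandwich are all correct.
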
 
\begin{proof}
 The proposition follows immediately from Theorem \ref{theorem=tc}.
\end{proof}

\begin{remark}
  We can compare Proposition \ref{spatial} with the topological complexity of the Cartesian product $(\mathbb{RP}^3\times\mathbb{R}^n)^{2}$, which is homotopy equivalent to the product $\mathbb{RP}^3\times\mathbb{RP}^3$. By Lemma \ref{tc-product},  \begin{equation*}
 \text{TC}((\mathbb{RP}^3\times\mathbb{R}^n)^{2}) = 7.
 \end{equation*} Thus, on $\mathbb{RP}^3\times\mathbb{RP}^n$, the complexity of the collision-free motion planning problem for two robots is more complicated than the complexity of the similar problem when the robots are allowed to collide.  
\end{remark}

\bibliographystyle{plain}

\end{document}